\newtheorem{definition}{Definition}[section]
\newtheorem{proposition}[definition]{Proposition}
\newtheorem{remark}[definition]{Remark}
\newtheorem{theorem}[definition]{Theorem}
\newtheorem{corollary}[definition]{Corollary}
\newtheorem{lemma}[definition]{Lemma}
\newtheorem*{theorem A}{Theorem A}
\newtheorem*{theorem B}{Theorem B}
\numberwithin{equation}{section}
\newcommand{\al}{\ensuremath{\alpha}}
\newcommand{\be}{\ensuremath{\beta}}
\newcommand{\D}{\ensuremath{\Delta}}
\newcommand{\fl}{\ensuremath{\forall}}
\newcommand{\Ga}{\ensuremath{\Gamma}}
\newcommand{\ga}{\ensuremath{\gamma}}
\newcommand{\la}{\ensuremath{\lambda}}
\newcommand{\Le}{\ensuremath{\mathcal{L}^*}}
\newcommand{\na}{\ensuremath{\nabla}}
\newcommand{\pa}{\ensuremath{\partial}}
\newcommand{\pt}{\ensuremath{\partial_t}}
\newcommand{\R}{\ensuremath{\mathbb{R}}}
\newcommand{\ra}{\ensuremath{\rightarrow}}
\newcommand{\si}{\ensuremath{\sigma}}
\newcommand{\bs}{\ensuremath{\backslash}}
\newcommand{\te}{\ensuremath{\theta}}
\title{Harnack inequalities for positive solutions of the heat equation on closed Finsler manifolds}
\author{C. Combete\footnote{Institut de Mathematiques et de Sciences Physiques, Porto-Novo, Benin,\ cyrille.combete@imsp-uac.org}, 
S. Degla\footnote{Ecole normale Superieure de Natitingou, Benin,\ sdegla@imsp-uac.org} 
and  L. Todjihounde\footnote{Institut de Mathematiques et de Sciences Physiques, Porto-Novo, Benin,\ leonardt@imsp-uac.org}}
\date{}
\begin{document}

\maketitle

\begin{abstract}
The main goal of this paper is to generalize some Li-Yau type gradient estimates to Finsler geometry in order to derive Harnack type 
inequalities. Moreover, we obtain, under some curvature assumption, a general gradient estimate for positive solutions of the heat equation 
when the manifold evolving along the Finsler Ricci flow. 
\end{abstract}

{\bf Keywords}: Gradient estimates, Harnack inequalities, Finsler Ricci flow, Heat flow.

{\bf Mathematics Subject Classification (2010)}: 53C60 · 35K05 · 53C44

\section{Introduction}
A very active research topic in geometric analysis is the study of the heat equation on manifolds because of its several 
applications in physics and natural sciences.
In their famous paper \cite{LY-86}, P. Li and S-T. Yau have proved the following 

\begin{theorem A}\label{theorem A}
 Let $M$ be an $n$-dimensional complete Riemannian manifold with Ricci curvature bounded from below by $-k$ for some nonnegative constant $k$ 
 and let $u\in C^\infty([0,T]\times M)$ be a positive solution of the heat equation
 \begin{equation}
  \D u-\pt u=0.
 \end{equation}
Then, for any $\al>1$, it holds
\begin{equation}\label{eq:li-yau}
 |\na f|^2-\al\pt f \leqslant \frac{n \al}{t}+\frac{n\al^2 k}{2(\al-1)},
\end{equation}
where $f=\log u$. Particularly, when $k=0$, letting $\al\rightarrow 1$, one obtains 
\begin{equation}
 |\na f|^2-\pt f \leqslant \frac{n \al}{t}.
\end{equation}

\end{theorem A}
It is worth to mention that the equation  (\ref{eq:li-yau}) is not sharp unless $k=0$. 
An important application of this estimate is that it provides  Harnack type inequalities.  
There is a rich litterature about improvement and generalization of the Li-Yau gradient estimate (\ref{eq:li-yau}), 
see for example \cite{CN-05,QZZ-13,Quian-14,BBG-17} and the references therein.  These results do not only concern 
the heat equation on Riemannian manifolds but also general linear and semi-linear parabolic equations on Kaeler manifolds 
or Alexandrov spaces.
Despite these important efforts done, the question of sharpness of the Li-Yau gradient estimate remain open.

%

Recently, a general gradient estimate for positive solutions of the heat
equation on closed Riemannian manifold which extends many existing estimates was proved   \cite{CYZ-17}.
The first purpose of this paper is to generalize this result to Finsler geometry in order to derive Harnack type inequalities.
Finsler manifolds are natural generalization of Riemannian manifolds in 
the sense that each tangent space is endowed with a Minkowskian norm  instead of the Euclidean one.
Observe that, in this paper, we use the nonlinear Shen's Finsler Laplacian which is an extension among others of the Laplace-Beltrami 
operator to Finsler geometry. Because of the lack of linearity, method used in Riemannian setting does not work. To overcome this difficulty, 
we use the weighted gradient and Laplacian which are linear operators (see Section \ref{sec:02} for the definitions).
Our first result is stated as follows.

\begin{theorem}\label{thm:harnack}
  Let $(M,F)$ be a closed Finsler manifold of dimension $n$, equiped with a smooth measure $d\mu$, such that the weighted 
 Ricci curvature satisfies $Ric_N\geqslant K$ for some $N\in [n,\infty)$ 
 and $K\in \R$ and take a positive global solution $u:[0,T]\times M \rightarrow \R$ to the heat equation. 
 Let us consider the functions $\la, \be, \Psi \in C^1((0,T])$ satisfying
 
  \begin{itemize}
  \item[$(B_1)$] $\be (t)\in (0,1)$, $\fl\ t \in (0,T]$;
  \item[$(B_2)$] $\displaystyle \lim_{t\rightarrow 0^+} \la(t)=0$ and $\la(t)>0$, $\fl\ t \in (0,T]$;
  \item[$(B_3)$] $\displaystyle \frac{\be'-2K^-\be}{1-\be}-(\ln \la)'>0$ on $(0,T]$; 
  \item[$(B_4)$] $\displaystyle\limsup_{t\rightarrow 0^+}\psi \geqslant 0$;
  \item[$(B_5)$] $\displaystyle\Psi'+\frac{\be'-2K^-\be}{1-\be}\Psi-\frac{N(\be'-2K^-\be)^2}{8\be(1-\be)^2}\geqslant 0$ for any $t \in (0,T]$, 
 \end{itemize}
 where $K^-:=\min\{K,0\}$. Then, for any $x,y\in M$ and $0<s<t\leqslant T$, we have
 \begin{equation}
  u(s,x)\leqslant u(t,y) \exp \left\{ \frac{d_F(y,x)^2}{4(t-s)^2}\int_s^t\frac{d\tau}{\be} + \int_s^t \Psi\ d\tau\right\},
 \end{equation}
where $d_F$ is the distance function induced by the Finsler metric $F$.
\end{theorem}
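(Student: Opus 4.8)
The plan is to deduce the Harnack inequality from the pointwise Li--Yau type estimate
\[
  \be(t)\,F^*(df)^2-\pt f\ \leqslant\ \Psi(t)\qquad\text{on }(0,T]\times M,
\]
where $f:=\log u$ and $F^*(df)$ is the norm of $df$ for the dual Finsler co-metric $F^*$ (so that $F^*(df)^2=g_{\na f}(\na f,\na f)$ on $M_f:=\{df\neq 0\}$, in the notation of Section~\ref{sec:02}). Granting this, fix $x,y\in M$ and $0<s<t\leqslant T$ and let $\ga:[s,t]\ra M$ be a constant-speed reparametrization of the reversal of a minimal forward geodesic from $y$ to $x$, so that $\ga(s)=x$, $\ga(t)=y$ and $F(-\dot\ga)\equiv d_F(y,x)/(t-s)$. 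Using $-df(\dot\ga)=df(-\dot\ga)\leqslant F^*(df)\,F(-\dot\ga)$ followed by $ab\leqslant \be a^2+b^2/(4\be)$,
\[
  \log\frac{u(s,x)}{u(t,y)}=-\int_s^t\!\big(\pt f+df(\dot\ga)\big)d\tau\ \leqslant\ \int_s^t\!\Big(\be\,F^*(df)^2-\pt f+\tfrac{F(-\dot\ga)^2}{4\be}\Big)d\tau ,
\]
and the gradient estimate together with $\int_s^t F(-\dot\ga)^2/(4\be)\,d\tau=\tfrac{d_F(y,x)^2}{4(t-s)^2}\int_s^t d\tau/\be$ yields exactly the claimed inequality.

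Thus the work is in the gradient estimate. Since $u$ solves the heat equation, $\pt f=\D^{\na f}f+F^*(df)^2$ (heat equation plus the linearity of the weighted Laplacian $\D^{\na f}$). I would set $w:=F^*(df)^2$, $\vfi:=\be w-\pt f-\Psi$ and $G:=\la\vfi$, and show $G\leqslant 0$ on $(0,T]\times M$ by a maximum principle. By $(B_2)$ (so $\la\to 0^+$), together with $(B_4)$ and $(B_5)$ (which force $\Psi\geqslant 0$ near $t=0$) and the regularity of $u$, one gets $\limsup_{t\ra 0^+}\sup_M G\leqslant 0$; hence if $G$ were positive somewhere it would attain a positive maximum at some $(t_0,x_0)$ with $t_0>0$. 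Since $w$ is only $C^1$ across $\{df=0\}$, the computation below is carried out assuming $x_0\in M_f$, the complementary case being disposed of by the separate treatment familiar from the Finsler Li--Yau literature (localization to $M_f$, or approximation).

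At the maximum $\na G=0$, $\D^{\na f}G\leqslant 0$, $\pt G\geqslant 0$, which (as $\la>0$) give $\na\vfi=0$, $\D^{\na f}\vfi\leqslant 0$, $\pt\vfi\geqslant-(\ln\la)'\vfi$, hence $(\D^{\na f}-\pt)\vfi\leqslant(\ln\la)'\vfi$. On the other hand, the Finsler Bochner--Weitzenb\"ock formula of Ohta--Sturm applied to $\tfrac12 w$, with $Ric_N\geqslant K\geqslant K^-$, the refined inequality $\|\na^2 f\|^2+Ric_N(\na f)\geqslant(\D^{\na f}f)^2/N+K^- w$, the identity $\pt w=2\,d(\pt f)(\na f)$, and $\D^{\na f}f=\pt f-w$, give
\[
  (\D^{\na f}-\pt)\vfi\ \geqslant\ \tfrac{2\be}{N}(\pt f-w)^2+(2\be K^- -\be')\,w-2\be\,dw(\na f)+\pt w+\Psi' .
\]
At the maximum $\na\vfi=0$ forces $\na(\pt f)=\be\,\na w$, whence $\pt w=2\,d(\pt f)(\na f)=2\be\,dw(\na f)$ and the two first-order terms cancel. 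Substituting $\pt f-w=-(1-\be)w-\vfi-\Psi$ and comparing with $(\D^{\na f}-\pt)\vfi\leqslant(\ln\la)'\vfi$ leaves a purely algebraic inequality; writing $b:=\be'-2K^-\be$ and completing the square in $(1-\be)w+\vfi+\Psi$ — exactly as licensed by $(B_5)$, namely $\Psi'+\tfrac{b}{1-\be}\Psi-\tfrac{Nb^2}{8\be(1-\be)^2}\geqslant 0$ — the $w$- and $\Psi$-terms cancel and one is left with $\big(\tfrac{b}{1-\be}-(\ln\la)'\big)\vfi\leqslant 0$. By $(B_3)$ the coefficient is positive, so $\vfi(t_0,x_0)\leqslant 0$, contradicting positivity of the maximum; hence $G\leqslant 0$ on $(0,T]\times M$, which is the gradient estimate. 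Conditions $(B_1)$--$(B_5)$ are tailored precisely so that this cancellation goes through.

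The main obstacle is this second part, and within it the genuinely Finsler-geometric points: that $w=F^*(df)^2$ is merely $C^1$ (not $C^2$) across $\{df=0\}$, so the maximum-principle computation must be localized to $M_f$ and the complementary case argued by approximation; and that the weighted Laplacian $\D^{\na f}$ is itself $t$-dependent (through $\na f$), so that the Riemannian fact ``$\pt$ commutes with $\D$'' — the source of the clean relation $(\D^{\na f}-\pt)(\pt f)=-\pt w$ used above — must be replaced by the corresponding, more delicate, identity of the Ohta--Sturm calculus, with a verification that no uncontrolled remainder survives. Once the Finsler Bochner inequality and this commutation identity are in hand, the rest is the bookkeeping matching $(B_1)$--$(B_5)$ to the completion of the square.
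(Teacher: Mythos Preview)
Your proposal is correct and follows essentially the same strategy as the paper: first establish the gradient estimate $\be F(\na f)^2-\pt f\leqslant\Psi$ (the paper's Theorem~\ref{thm:grad-esti}) via a parabolic maximum principle applied to $\la(\be F(\na f)^2-\pt f-\Psi)$, using the Bochner--Weitzenb\"ock inequality and completing the square so that $(B_5)$ absorbs the remainder; then integrate along the reversed minimal geodesic exactly as you do.

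The one organizational difference worth noting is this. You work pointwise at the maximum and use $\na\vfi=0$ there to cancel the first-order terms $-2\be\,dw(\na f)+\pt w$. The paper instead builds the drift term $2dG(\na f)$ into the parabolic operator from the outset and proves the differential inequality
\[
  \D^V G+2\,dG(\na f)-\pt G\ \geqslant\ \frac{\be'-2K^-\be}{1-\be}\,G
\]
in the distributional sense on all of $(0,T)\times M$, then invokes the (linear) parabolic maximum principle for strict subsolutions. This buys exactly what you flag as the main obstacle: the distributional formulation, inherited from Ohta--Sturm, sidesteps both the lack of $C^2$ regularity of $F(\na f)^2$ across $\{df=0\}$ and the failure of $\pt$ to commute with $\D^{\na f}$, since the integrations by parts in the weak formulation absorb these issues automatically. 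So the two arguments are equivalent where $df\neq 0$, but the paper's packaging is what makes the argument go through globally without the case analysis or approximation you anticipate needing.
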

The related  definitions such as weighted Ricci curvature and the Finsler distance function are given in Section \ref{sec:02} below.
Recall that Ohta and Sturm \cite{OS-14} have already proved some Harnack type inequalities in Finsler setting. Theorem \ref{thm:harnack}
can be view as a generalisation of their results.

The Li-Yau type gradient  estimates are also investigated on manifolds evolving along geometric flows. 
The case of the Ricci flow is well known and was iniated by Hamilton in \cite{Ham-82} on 
Riemannian manifolds ; see  \cite{Liu-09,Sun-11} and references therein.
 In \cite{Bao-07}, Bao has introduced the notion of Finsler Ricci flow and recently, Lakzian \cite{Lak-15} 
has derived Harnack estimates for positive solutions to the heat 
equation under this flow. This paper  gives also a generalization of Lakzian result.

\begin{theorem}\label{thm:gradient-flow}
  Let $(M,F)$ be a closed Finsler manifold of dimension $n$ endowed with a smooth measure $d\mu$. 
  Let $(F_t)_{t\in [0,T]}$ be a solution of the Ricci flow on $M$ with $F_0=F$. 
 Assume that there exists some positive constants $K_i$, $i=1,2,3,4$, such that for all $t\in [0,T]$, $F_t$ has isotropic $S$-curvature, 
 $S=\si F_t + d\varphi$ for some time dependent functions $\si$ and $\varphi$ with $-K_3 F_t\leq d\si$ and $-K_4 F_t^2\leq Hess\ \varphi$, 
 and its Ricci curvature  satisfies
 $-K_1\leq Ric_{ij}\leq K_2$. 
 
 Let $u:[0,T]\times M\rightarrow \R$ be a positive solution of the heat equation under the Finsler Ricci flow $(F_t)$. Let 
 $\be, \la \in C^1((0,T])$ satisfying 
 \begin{itemize}
  \item[$(C_1)$] $\be (t)\in (0,1)$, $\fl\ t \in (0,T]$;
  \item[$(C_2)$] $\displaystyle \lim_{t\rightarrow 0^+} \la(t)=0$ and $\la(t)>0$, $\fl\ t \in (0,T]$;
  \item[$(C_3)$] $\frac{2\be'}{1-\be}-(\ln \la)'<0$ on $(0,T]$; 
 \end{itemize}

 Then we have, for any $t\in (0,T]$,
 \begin{equation}
\be F(\na f)^2-\pt f \leq  \frac{n}{2\be}\left((\ln \la)'-\frac{2\be'}{1-\be}\right)+\frac{n(C_1+\be')}{2\be(1-\be)}+
  \frac{n^{3/2}\sqrt{C_2}}{\be}+\sqrt{2nC_3},
 \end{equation}
where $C_1:=K_1$, $C_2:=\max\{K_1^2,K_2^2\}$ and $C_3=K_3+K_4$.
\end{theorem}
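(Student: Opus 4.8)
The plan is to adapt the Li--Yau maximum principle to the time-dependent weighted Riemannian structure $g_{\na u}$ induced by the gradient vector field of $u$; here $\be$ plays the role of $1/\al$ in Theorem~A. First I would set $f=\log u$. Since $u>0$ solves the heat flow under $(F_t)$, the function $f$ satisfies an equation of the form $\pt f=\D^{\na u}f+F(\na f)^2+\mathcal{E}$, where $\D^{\na u}$ is the linearized (weighted) Laplacian on the reference field $\na u$ and $\mathcal{E}$ collects the contribution of the evolving measure and $S$-curvature; by the hypotheses $S=\si F_t+d\varphi$, $-K_3F_t\leq d\si$, $-K_4F_t^2\leq Hess\ \varphi$, this term is controlled in terms of $K_3+K_4=C_3$. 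I would then introduce
\[
 w:=\be F(\na f)^2-\pt f,\qquad G:=\la\, w .
\]
Because the solution is smooth up to $t=0$, condition $(C_2)$ forces $G\to 0$ as $t\to 0^+$, which will make the boundary contribution of the maximum principle harmless.

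The core step is a Bochner--Weitzenb\"ock identity for $(\pt-\D^{\na u})w$ along the Finsler Ricci flow. Three mechanisms feed into it: the Finsler weighted Bochner formula (producing $\|\na^2 f\|^2$, the term $\langle\na f,\na\D^{\na u}f\rangle$ and $Ric(\na f)$); the variation $\pt g_{ij}=-2Ric_{ij}$ of the metric along $(F_t)$, which yields $2Ric(\na f)+2\langle\na f,\na\pt f\rangle$ when one differentiates $F_t(\na f)^2$ in $t$; and the $t$-derivative of the $S$-curvature terms. Using $\|\na^2 f\|^2\geq (\D^{\na u}f)^2/n$, the two-sided bound $-K_1\leq Ric_{ij}\leq K_2$, Young's inequality on the mixed curvature--Hessian terms (this is where $n^{3/2}\sqrt{C_2}$ with $C_2=\max\{K_1^2,K_2^2\}$ is generated), and the identity $\D^{\na u}f=(\be-1)F(\na f)^2-w+\mathcal{E}$, I would arrive at a pointwise inequality of the schematic shape
\[
 (\pt-\D^{\na u})w\ \leq\ -\frac{2\be}{n}\big((1-\be)F(\na f)^2+w\big)^2+\be'F(\na f)^2+\Theta+\langle b,\na w\rangle ,
\]
where $\Theta$ is a combination of the curvature constants and $b$ a drift term; the exact coefficients are bookkeeping.

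Finally I would invoke the maximum principle on $[\varepsilon,t_0]\times M$. Compactness gives a maximum of $G$; if it sits at $t=\varepsilon$, then $(C_2)$ makes the claim trivial after $\varepsilon\to 0$, so I may assume it is attained at an interior time and at a point with $\na u\neq0$ (the degeneracy locus of $\D^{\na u}$ is removed by the usual perturbation/limiting argument, cf.\ \cite{OS-14,Lak-15}). There $\na w=0$, $\D^{\na u}G\leq0$ and $\pt G\geq0$, so $0\geq\la'w+\la(\pt-\D^{\na u})w$. Inserting the inequality above and completing the square in the variable $F(\na f)^2$ --- which is legitimate exactly because $(C_3)$ pins down the sign of its coefficient, and which is what converts the $\be'F(\na f)^2$ term into the shift $-\tfrac{2\be'}{1-\be}$ inside the final bracket --- reduces everything to a quadratic inequality $\tfrac{2\be}{n}w^2\leq\big((\ln\la)'-\tfrac{2\be'}{1-\be}\big)w+(\text{curvature constants})$. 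Solving for $w$ at the maximum point, dividing by $\la$, and letting $\varepsilon\to0$ produces the stated bound, the curvature constants reorganising into $\tfrac{n(C_1+\be')}{2\be(1-\be)}+\tfrac{n^{3/2}\sqrt{C_2}}{\be}+\sqrt{2nC_3}$.

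The main obstacle I anticipate is the Bochner--Weitzenb\"ock computation of the second paragraph. One must simultaneously cope with the nonlinearity of Shen's Laplacian --- hence work with the linearization $\D^{\na u}$ and keep track of its non-Riemannian correction terms (Cartan tensor, $S$-curvature) --- with the fact that the reference field $\na_t u$ is itself moving, so that $\pt F_t(\na f)^2$ separates into a metric-variation piece ($-2Ric$) and a $\na u$-variation piece, and with absorbing every mixed curvature/Hessian term into the good negative square $-\tfrac{2\be}{n}(\D^{\na u}f)^2$ with constants sharp enough that precisely $C_1=K_1$, $C_2=\max\{K_1^2,K_2^2\}$ and $C_3=K_3+K_4$ emerge. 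Once that identity is in place, the remaining arguments are routine.
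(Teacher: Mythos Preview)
Your plan is correct and is essentially the paper's own argument: set $G=\be F(\na f)^2-\pt f$, $H=\la G$, derive a parabolic inequality for $H$ by combining the Bochner--Weitzenb\"ock formula with the flow equation $\pt g_{ij}=-2Ric_{ij}$, absorb the mixed term $Ric^{ij}f_{ij}$ via Young's inequality (this is exactly where $C_2=\max\{K_1^2,K_2^2\}$ and the factor $n^{3/2}$ appear), apply the maximum principle, and solve the resulting quadratic in $H$.

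Two small corrections to your write-up. First, the $S$-curvature does \emph{not} enter as an error term $\mathcal{E}$ in the equation for $f$: the identity $\pt f=\D f+F(\na f)^2$ holds exactly. The contribution of $S$ comes instead through the $\dot S(\na f)$ term in the Bochner identity $\D^{\na u}\big(\tfrac{1}{2}F(\na f)^2\big)-d(\D f)(\na f)=Ric_\infty(\na f)+\|\na^2 f\|^2_{HS(\na f)}$, and it is there that the isotropic hypothesis $S=\si F_t+d\vfi$ together with the bounds on $d\si$ and $Hess\,\vfi$ gives $\dot S(\na f)\geq -C_3$. Second, completing the square in $F(\na f)^2$ only requires $\be\in(0,1)$, i.e.\ condition $(C_1)$; condition $(C_3)$ is used afterwards, to fix the sign of the linear coefficient in the final quadratic in $H$ so that the root can be bounded by $\frac{-b}{a}+\sqrt{-c/a}$. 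The paper also runs the whole computation in the distributional sense and invokes a weak parabolic maximum principle rather than your pointwise derivative test, but the structure is identical.
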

 We obtain an improvement of \cite[Theorem 1.1]{Lak-15} 
with a suitable choice of functions $\be$ and $\la$.

The paper is organised as follows. In section \ref{sec:02}, we review some facts about Finsler geometry which we need for the sequel.
Thereafter,  we prove  Li-Yau gradient estimate in  section \ref{sect:3}. That allows us to show the Theorem \ref{thm:harnack}.
In the last section, we deal with gradient estimate in time dependent Finsler manifolds where we prove Theorem \ref{thm:gradient-flow}.

\section{Preliminaries}\label{sec:02}
In this section, we briefly recall some  basic concepts of Finsler geometry necessary for further discussions.
We refer to \cite{Ohta-17,Bao-07}  and references therein for more details.

Let $M$ be an $n$-dimensional smooth manifold and $\pi:TM\rightarrow M$ be the natural projection from the tangent bundle $TM$. 
A point $(x,y)\in TM$ is such that $y=y^i\frac{\partial}{\partial x^i}$ in the local coordinates $(x^i,y^i)$ on $TM$. 
Let $F:TM\rightarrow [0,\infty)$ be a Finsler metric on $M$, that is $F$ satisfies 
\begin{itemize}
 \item[(i)] Regularity: $F$ is smooth on $TM\smallsetminus \{0\}$;
 \item[(ii)] Positive homogeneity: $F(x,\la y)=\la F(x,y)$, $\forall\ \la >0$;
 \item[(iii)] Strong convexity: The fundamental quadratic form 
 \begin{equation}\label{eq:fund}
  g=g_{ij}(x,y)\ dx^i\otimes dx^j;\hspace{0.5cm} g_{ij}(x,y):=\frac{1}{2}\frac{\partial^2 F^2}{\partial y^i\partial y^j}(x,y),
 \end{equation}
is positive definite for any $(x,y)\in TM$ with $y\neq 0$.
\end{itemize}

For $x,y\in M$, we define the distance from $x$ to $y$ by 
\begin{equation}
 d_F(x,y):=\inf_\ga \int_0^1 F(\ga(t),\dot\ga(t)\ dt,
\end{equation}
where the infimum is taken over all $C^1$-curves $\ga:[0,1]\rightarrow M$ such that $\ga(0)=x$ and $\ga(1)=y$. 
Since the Finsler metric is only positively homogeneous, the distance function may be asymmetric. 
We call geodesic any $C^\infty$-curve $\ga$ on $M$ which is locally minimising and has constant speed. We also 
define the exponential map by 
\begin{equation*}
 \exp_x(v):= \ga(1),
\end{equation*}
where $\ga:[0,1]\rightarrow M$ is a geodesic with $\dot\ga(0)=v\in T_xM$.

Let $V=v^i\frac{\partial}{\partial x^i}$ be a nonzero vector on an open subset $\mathcal{U}\subset M$.
Through equation (\ref{eq:fund}), one defines a Riemannian metric 
\begin{equation*}
 g_V\left(X^i\frac{\pa}{\pa x^i},Y^j\frac{\pa}{\pa x^j}\right):= g_{ij}(V)X^iY^j,
\end{equation*}
and a covariant derivative by
\begin{equation*}
 D^V_{\frac{\pa}{\pa x^i}}\frac{\pa}{\pa x^j}:=\Ga_{ij}^k(x,V)\frac{\pa}{\pa x^k},
\end{equation*}
where $\Ga_{ij}^k(x,V)$ are coefficients of the Chern connection.

The flag curvature of the plane spanned by two linearly independent vectors $V,W\in T_xM\smallsetminus\{0\}$ is defined by 
\begin{equation*}
 K(V,W):=\frac{g_V(R^V(V,W)W,V)}{g_V(V,V)g_V(W,W)-g_V(V,W)^2},
\end{equation*}
where $R^V$ is the Chern curvature as follows
\begin{equation*}
 R^V(X,Y)Z:=D^V_XD_Y^VZ-D^V_YD^V_XZ-D^V_{[X,Y]}Z.
\end{equation*}

Then, the Ricci curvature of $(M,F)$ is given by
\begin{equation*}
 Ric(V):=\sum_{i=1}^{n-1}K(V,e_i),
\end{equation*}
where $\{e_1,\dots,e_{n-1},e_n:=\frac{V}{F(V)}\}$ is an orthonormal basis of $T_xM$ 
with respect to $g_V$ and the Ricci tensor is defined as follows :
\begin{equation}
 Ric_{ij}:=\frac{1}{2}\frac{\pa^2 (F^2Ric)}{\pa y^i\pa y^j}.
\end{equation}

Now, let us consider an arbitrary volume form $d\mu=\sigma(x)dx$ on $M$. 
The distortion of $(M,F,d\mu)$ is defined, for any $y\in T_xM\smallsetminus \{0\}$, by 
\begin{equation*}
 \tau(x,y):=\ln \left( \frac{\sqrt{det(g_{ij}(x,y))}}{\sigma(x)}\right).
\end{equation*}

Let $y\in T_xM\smallsetminus \{0\}$ and $\ga:(-\epsilon,\epsilon)\longrightarrow \R$ be the geodesic with $\ga(0)=x$ and $\dot\ga(0)=y$.
The $S$-curvature  defined as 
\begin{equation*}
 S(x,y):=\frac{d}{dt}\left[\tau\big(\dot \ga(t)\big)\right]_{\big|t=0},
\end{equation*}
 measures the rate of changes of the distortion along geodesics. One also defines
 \begin{equation*}
  \dot S(y):=\frac{1}{F(x,y)^2}\frac{d}{dt}\left[S\big(\ga(t),\dot\ga(t)\big)\right]_{\big|t=0}.
 \end{equation*}

The weighted Ricci curvature of the Finsler manifold $(M,F,d\mu)$ is defined as follows 
\begin{eqnarray*}
 Ric_n(y)&:=& \left\{ \begin{array}{l}
               Ric(y)+\dot S(y),\ \text{ for }\ S(y)=0,\\[.2cm]
               -\infty,\hspace{0.5cm} \text{ otherwise},
              \end{array}
               \right.\\
Ric_N(y)&:=& Ric(y)+\dot S(y)-\frac{S(y)^2}{(N-n)F(y)^2},\ \forall\ N\in(n,\infty)\\
Ric_\infty(y)&:=& Ric(y)+\dot S(y).
\end{eqnarray*}

\vspace{0.5cm}

In  analogy with the Ricci flow in Riemannian setting, Bao \cite{Bao-07} proposed the following 
definition of the Finsler Ricci flow:
\begin{equation}
 \frac{\pa}{\partial t}g_{ij}=-2Ric_{ij},\quad g_{|t=0}=g_0,
\end{equation}
which contrating to $y^iy^j$, via Euler theorem, provides 
\begin{equation}
 \frac{\pa}{\partial t}\log F=-Ric,\quad  F_{|t=0}=F_0, 
\end{equation}
where $F_0$ is the initial Finsler structure. Short time existence and uniqueness of the Finsler Ricci flow 
has been studied in some particular cases, see e.g \cite{AR-13,BS-15}.
\vspace{0.5cm}

For their convenient use, let us recall the definitions of the gradient, the Hessian and the Laplacian operators on 
$(M,F,d\mu)$. Let $u$ be a differentiable function  and  $V$ a 
smooth vector field  on $M$. We set $M_u:=\{x\in M ; \ du(x)\neq 0\}$ and $M_V:=\{x\in M\ ; \ V(x)\neq 0\}$. 
The divergence of $V:=V^i\frac{\pa}{\pa x^i}$ with respect to the volume form $d\mu:=\sigma(x)dx$ is defined by
\begin{equation*}
 div(V):=\sum_{i=1}^n\left(\frac{\pa V^i}{\pa x^i}+\frac{V^i}{\sigma}\frac{\pa \sigma}{\pa x^i}\right).
\end{equation*}

Let $\Le:T^*M\longrightarrow TM$ be the Legendre transform which assigns to each $\al\in T_x^*M$ the unique element $v\in T_xM$ such that 
$\al(v)=F^*(x)^2$ and $F(v)=F^*(\al)$, where $F^*$ stands for the dual norm of $F$. Then the gradient vector and the Laplacian
of $u$ are given by 
\begin{equation*}
 \na u(x)=\Le\big(du(x)\big),\hspace{0.5cm} \D u(x):=div(\na u).
\end{equation*}
If $V$ does not vanish on $M_u$, one can also define  the weighted gradient vector and the weighted Laplacian of $u$ 
on the Riemannian manifolds $(M,g_V)$ as
\begin{equation*}
 \na^V u:=\left\{ \begin{array}{l}
                   g_{ij}(V)\frac{\pa u}{\pa x^j}\frac{\pa }{\pa x^i},\ \text{ on }\ M_V,\\[.2cm]
                   0\hspace{1cm} \ \text{ on } M\smallsetminus  M_V,
                  \end{array}
           \right.,\hspace{0.5cm}
           \D^V u:=div(\na^V u).
\end{equation*}
In particular, on $M_u$, we have
\begin{equation*}
 \na u=\na^{\na u}u,\hspace{0.5cm} \D u=\D^{\na u}u.
\end{equation*}
In other words, the Laplacian of $u$ can be expressed using the $S$-curvature as 
\begin{equation}
 \D u=tr_{g_{\na u}}(\na^2u)-S(\na u),
\end{equation}
where $\na^2u:=D^{\na u}_.\na u$ is the Hessian of $u$. Here the trace is taken with respect to an $g_{\na u}$-orthonormal basis. 
The point-wise Finslerian version of the Bochner-Weitzenbock formulas are given by 
\begin{equation}
 \D^{\na u}(\frac{F^2(\na u)}{2})-d(\D u)(\na u)\geq Ric_N(\na u)+\frac{(\D u)^2}{N},
\end{equation}
\begin{equation}
  \D^{\na u}\left(\frac{F^2(\na u)}{2}\right)-d(\D u)(\na u)=Ric_\infty(\na u)+\|\na^2u\|^2_{HS(\na u)}.
\end{equation}

We conclude this section by recalling some notions about the heat equation $\pt u=\D u$ associated with 
the Finsler Laplacian. We consider a Finsler space $(M,F,d\mu)$ where $(M,F)$ is a Finsler space 
and $d\mu$ a smooth measure on $M$.

A function $u:[0,T]\times M\rightarrow \R$ is said to be a global solution of the heat equation $\pt u=\D u$ if it satifies the 
following:
\begin{itemize}
 \item[(i)] $u\in L^2\big([0,T],H^1_0(M)\big)\ \cap\ H^1([0,T],H^-1(M)) $,
 \item[(ii)] for all $t\in [0,T]$ and $\phi\in C_0^\infty(M)$,
 \begin{equation*}
  \int_M \phi.\pt u_t\ d\mu=-\int_M d\phi(\na u_t)\ d\mu,
 \end{equation*}
where  $u_t:=u(t,.)$.
\end{itemize}

\section{Gradients estimates on static Finsler manifolds}\label{sect:3}

This section deals with the proof of Theorem \ref{thm:harnack}.
Let $(M,F,d\mu)$ be a closed $n$-dimensional Finsler  manifold equiped with a smooth measure $d\mu$. 
Assume that its weighted Ricci curvature $Ric_N\geq K$ for some $N\in [n,\infty)$ and $K\in \R$.

Let us consider  a global positive solution 
$u:[0,T]\times M\rightarrow \R$ to the Finsler heat equation on $(M,F,d\mu)$. 
We will fix a measurable one-parameter family of non-vanishing vector fields 
$\{V_t\}_{t\in [0,T]}$ on $M$ such that $V_t=\na u_t$. 
Here, $u_t(x):=u(t,x)$, $(t,x)\in [0,T]\times M$. 

For any  $(t,x)\in [0,T]\times M$, we set  $f(t,x):=f_t(x)=\ln u(t,x)$. 
One can easily check that   (see \cite[Equation 4.2]{OS-14}),
\begin{equation}\label{eq:p-log}
 \pt\big(F(\na f)^2\big)=2d(\pt f)(\na f),
\end{equation}
and for every $t\in [0,T]$, we have in distributional sense,
\begin{equation}\label{eq:l-log}
 \D f+ F(\na f)^2=\pt f.
\end{equation}

Suppose further that there exist some functions $\la, \be, \Psi \in C^1((0,T])$ such that
  \begin{itemize}
  \item[$(B_1)$] $\be (t)\in (0,1)$, $\fl\ t \in (0,T]$;
  \item[$(B_2)$] $\displaystyle \lim_{t\rightarrow 0^+} \la(t)=0$ and $\la(t)>0$, $\fl\ t \in (0,T]$;
  \item[$(B_3)$] $\displaystyle \frac{\be'-2K^-\be}{1-\be}-(\ln \la)'>0$ on $(0,T]$; 
  \item[$(B_4)$] $\displaystyle\limsup_{t\rightarrow 0^+}\psi \geqslant 0$;
  \item[$(B_5)$] $\displaystyle\Psi'+\frac{\be'-2K^-\be}{1-\be}\Psi-\frac{N(\be'-2K^-\be)^2}{8\be(1-\be)^2}\geqslant 0$ for any $t \in (0,T]$, 
 \end{itemize}
 where $K^-:=\min\{K,0\}$.

Let us consider the function $G$ defined on $[0,T]\times M$ by $G=\be F(\na f)^2-\pt f-\Psi$.
We claim the following

\begin{lemma}
 $G$ satisfies
\begin{equation}\label{eq:lG}
 \D^V G + 2 dG(\na f) -\pt G \geqslant \frac{\be'-2K^-\be}{1-\be} G,
\end{equation}
in the sense of distribution on $(0,T)$.
\end{lemma}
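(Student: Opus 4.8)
The plan is to compute the evolution of $G=\be F(\na f)^2-\pt f-\Psi$ by combining the Bochner–Weitzenböck inequality with the parabolic equation \eqref{eq:l-log} and the time-derivative identity \eqref{eq:p-log}, then to organize the resulting terms so that a multiple of $G$ itself appears on the right, with all remaining terms having a favorable sign thanks to $(B_5)$ and the curvature lower bound.

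\textbf{Step 1: Set-up and the linearized operator.} I would work on $M_u$ (where $du\neq0$), which is where $V=\na f=\na u/u$ is non-vanishing and $\D^V$, $\na^2 f$ are well-defined; the distributional statement on all of $(0,T)\times M$ then follows by the standard argument (the bad set is negligible and one tests against nonnegative $\phi\in C_0^\infty$). Write $w:=F(\na f)^2$. From \eqref{eq:l-log}, $\pt f=\D f+w$, so $G=\be w-\D f-w-\Psi=(\be-1)w-\D f-\Psi$. I will need the three ingredients: (a) the Bochner formula $\D^{\na f}(w/2)-d(\D f)(\na f)\ge Ric_N(\na f)+(\D f)^2/N\ge K w+(\D f)^2/N$; (b) $\pt w=2d(\pt f)(\na f)$ from \eqref{eq:p-log}; and (c) $\pt(\D f)=\D^V(\pt f)$ (commuting $\pt$ with the linearized Laplacian along the chosen family $V_t=\na u_t$ — this is the key linearity gain from using the weighted operator, and is exactly why $\{V_t\}$ is fixed the way it is).

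\textbf{Step 2: Assemble $\D^V G+2dG(\na f)-\pt G$.} Apply $\D^V$ and $2d(\cdot)(\na f)$ and $\pt$ to each piece of $G=(\be-1)w-\D f-\Psi$. The $(\be-1)w$ term contributes (via Bochner) a good term $2(\be-1)[Kw+(\D f)^2/N]$ (note $\be-1<0$ so signs must be tracked carefully, which is why $K^-$ rather than $K$ appears), plus cross terms $2(\be-1)d(\D f)(\na f)$ and the $\be'$ term $-\be' w$ from $\pt$. The $-\D f$ term contributes $-\D^V(\D f)-2d(\D f)(\na f)+\pt(\D f)$, and by ingredient (c) the $\D^V(\D f)$ and $\pt(\D f)$ combine. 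The $-\Psi$ term contributes simply $\Psi'$ (since $\Psi=\Psi(t)$, $\D^V\Psi=0=d\Psi(\na f)$) but with sign $+\Psi'$... here one must be careful: $-\pt(-\Psi)=\Psi'$. I would then use \eqref{eq:l-log} again to eliminate $\D f$ in favor of $w$ and $\pt f$, and use $\pt f=\be w-G-\Psi$ to reintroduce $G$. After collecting, the identity should read schematically
\begin{equation*}
\D^V G+2dG(\na f)-\pt G=\frac{\be'-2K^-\be}{1-\be}G+\frac{2\be}{N}\Bigl(\D f+\frac{N(\be'-2K^-\be)}{4\be(1-\be)}\Bigr)^2+\Bigl[\Psi'+\frac{\be'-2K^-\be}{1-\be}\Psi-\frac{N(\be'-2K^-\be)^2}{8\be(1-\be)^2}\Bigr]+(\text{curvature remainder}),
\end{equation*}
where the completion of the square in $\D f$ is the algebraic heart of the computation, the bracketed term is $\ge0$ by $(B_5)$, the curvature remainder is $\ge0$ once $K$ is replaced by $K^-$ in the unfavorable places, and the square is manifestly $\ge0$. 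Dropping the nonnegative terms yields \eqref{eq:lG}.

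\textbf{Main obstacle.} The delicate part is bookkeeping the signs when $\be-1<0$: the Bochner term $Kw$ enters multiplied by the negative quantity $2(\be-1)$, so one genuinely needs $K^-\le K$ to bound $2(\be-1)Kw\ge 2(\be-1)K^- w$ and push the leftover curvature into the "remainder $\ge0$" bucket; getting the coefficient in front of $G$ to be exactly $(\be'-2K^-\be)/(1-\be)$ requires that this replacement be done consistently in both the $\be w$-from-Bochner term and wherever $\pt f=\be w-G-\Psi$ is substituted back. The second subtlety is justifying $\pt(\D f)=\D^V(\pt f)$ and the whole computation distributionally across the zero set of $du$; I would handle this exactly as in \cite{OS-14}, noting $w$ and $\D f$ are only in $W^{1,2}$ but the chain of integrations by parts against $\phi\ge0$ goes through, with the $\D f=0$ issue absorbed because the dropped terms are nonnegative a.e.
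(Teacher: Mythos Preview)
Your proposal is correct and follows essentially the same route as the paper: apply the Bochner--Weitzenb\"ock inequality to the $F(\na f)^2$ piece, observe that the linearized parabolic operator annihilates $\pt f$ (equivalently, your ingredient~(c) $\pt(\D f)=\D^V(\pt f)$, which is exactly what the paper's first displayed computation establishes distributionally via \eqref{eq:p-log} and \eqref{eq:l-log}), rewrite $-(\be'-2K^-\be)F(\na f)^2=\frac{\be'-2K^-\be}{1-\be}(\D f+G+\Psi)$, complete the square in $\D f$, and drop the resulting nonnegative terms using $(B_5)$. The only cosmetic difference is that you substitute $\pt f=\D f+F(\na f)^2$ at the outset to write $G=(\be-1)w-\D f-\Psi$, whereas the paper keeps the pieces $\be w$ and $-\pt f$ separate and handles each under the test function; both lead to the same algebra.
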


\begin{proof}
 For all $\phi \in H^1_0\big((0,T)\times M\big)$, we have 
\begin{eqnarray*}
  & & \int_0^T \!\!\!\int_M \{-d\phi(\na^V (\pt f)) + 2 \phi d(\pt f))(\na f) -\phi\pt(\pt f)\}\ d\mu dt\\
     &=& \int_0^T\!\!\! \int_M \{-d\phi(\na^V (\pt f)) + \phi [ 2  d(\pt f))(\na f)-\pt(F(\na f)^2)-\pt(\D f)]\}\ d\mu dt\\
     &=& \int_0^T \!\!\!\int_M \{-d\phi(\na^V (\pt f)) -\phi \D(\pt f)\}\ d\mu dt\\
     &=& 0,
\end{eqnarray*}
where we used equations (\ref{eq:l-log}) and (\ref{eq:p-log}).

Furthermore, using again (\ref{eq:l-log}), (\ref{eq:p-log}) and the Bochner-Weitzenbock inequality, 
we have for any nonnegative  test function $\phi \in H^1_0\big((0,T)\times M\big)$,

\begin{eqnarray*}
  & & \int_0^T \!\!\!\int_M \{-d\phi(\na^V (F(\na f)^2)) + 2 \phi d(F(\na f)^2))(\na f) -\phi\pt(F(\na f)^2)\}\ d\mu dt\\
    &=& \int_0^T\!\!\! \int_M \{-d\phi(\na^V (F(\na f)^2)) -2\phi d(\D f)(\na f) +\phi(2d(\pt f)(\na f)-\pt(F(\na f)^2))\}\ d\mu dt\\
    &=& \int_0^T\!\!\! \int_M \{-d\phi(\na^V (F(\na f)^2)) -2\phi d(\D f)(\na f)\}\ d\mu dt\\
    &\geqslant& \int_0^T\!\!\! \int_M 2 \phi \{Ric_N(\na f)+\frac{(\D f)^2}{N}\}\ d\mu dt 
    \geqslant \int_0^T\!\!\! \int_M 2 \phi \{K F(\na f)^2 +\frac{(\D f)^2}{N}\}\ d\mu dt\\
    &\geqslant& \int_0^T\!\!\! \int_M 2 \phi \{K^- F(\na f)^2 +\frac{(\D f)^2}{N}\}\ d\mu dt\\
\end{eqnarray*}

Therefore, for every nonnegative $\phi \in H^1_0\big((0,T)\times M\big)$,
\begin{eqnarray*}
  & & \int_0^T\int_M \left\{ -d\phi (\na^V G)+2\phi dG(\na f)-\phi \pt G \right\}\ d\mu dt \\
   &\geqslant& \int_0^T\int_M \phi\left\{2 \be\left(K^- F(\na f)^2 +\frac{(\D f)^2}{N} \right) -\be'F(\na f)^2+\Psi'\right\}\ d\mu dt\\
   &=& \int_0^T\int_M \phi\left\{ \frac{2\be}{N}(\D f)^2+\frac{\be'-2K^-\be}{1-\be}(\D f+G+\Psi) + \Psi'\right\}\ d\mu dt\\
   &\geqslant& \int_0^T\int_M \phi\left\{ \frac{\be'-2K^-\be}{1-\be}G+\Psi'+\frac{\be'-2K^-\be}{1-\be}\Psi
   -\frac{N(\be'-2K^-\be)^2}{8\be(1-\be)^2}\right\}\ d\mu dt\\
   &\geqslant& \int_0^T\int_M \frac{\be'-2K^-\be}{1-\be}\phi G \ d\mu dt
\end{eqnarray*}

which complete the proof of (\ref{eq:lG}). 
\end{proof}

Now, let us define the function $H(t,x)=\la(t)G(t,x)$. From (\ref{eq:lG}), we can easily deduce that the following equation 
\begin{equation}\label{eq:lH}
 \D^VH+2dH(\na f)-\pt H\geqslant \left(\frac{\be'-2K^-\be}{1-\be}-(\ln \la)' \right) H,
\end{equation}
holds in the distributional sense on $(0,T)$.

Consider a point $(t_0,x_0)$ at which $H$ attains its maximum on $[0,T]\times M$.

\begin{lemma}
 We have $H(t_0,x_0)\leqslant 0$.
\end{lemma}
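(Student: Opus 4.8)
The plan is to run a maximum-principle argument on the function $H=\lambda G$, distinguishing two cases according to where the maximum point $(t_0,x_0)$ lies in time. First suppose $t_0 = 0$ (or more precisely, the maximum is approached as $t\to 0^+$). Then by conditions $(B_2)$ and $(B_4)$ together with the fact that $G = \beta F(\na f)^2 - \pt f - \Psi$ stays bounded below near $0$ in a way controlled by $-\Psi$ (indeed $\beta F(\na f)^2 \geq 0$, and using $\D f + F(\na f)^2 = \pt f$ one has $\pt f$ not too large), we get $\limsup_{t\to 0^+} H \leq \limsup_{t\to 0^+}\lambda(t)\cdot(\text{bounded from above by }-\Psi\text{-type term}) \leq 0$ since $\lambda\to 0$; hence $H(t_0,x_0)\leq 0$ in this case. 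The cleaner way is: since $\lambda(t)\to 0$ and $G$ is, for fixed small $t$, bounded on the compact $M$, we conclude $H\to 0$ as $t\to 0^+$, so either the supremum over $[0,T]\times M$ is $\leq 0$ (done), or it is attained at some interior or terminal time $t_0>0$.

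So assume $t_0 \in (0,T]$. At such a maximum point we want to plug the test-function inequality \eqref{eq:lH} into the standard distributional maximum principle. At $(t_0,x_0)$: since $H$ attains its spatial maximum there, in the distributional/barrier sense $\D^V H \leq 0$ and $dH(\na f) = 0$ (the spatial gradient of $H$ vanishes); and since $t_0$ is the maximizing time, $\pt H \geq 0$ if $t_0 < T$ (and $\pt H \geq 0$ still holds as a one-sided derivative if $t_0 = T$, because $H$ cannot be increasing at a right endpoint maximum — actually one only needs $\pt H \geq 0$, which holds at a maximum over $[0,T]$ whether $t_0$ is interior or the right endpoint). Therefore the left-hand side of \eqref{eq:lH} is $\leq 0$ at $(t_0,x_0)$. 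On the other hand condition $(B_3)$ says the coefficient $\frac{\beta' - 2K^-\beta}{1-\beta} - (\ln\lambda)'$ is strictly positive on $(0,T]$. Hence \eqref{eq:lH} forces
\[
0 \;\geq\; \left(\frac{\beta' - 2K^-\beta}{1-\beta} - (\ln\lambda)'\right)\!\Big|_{t_0} H(t_0,x_0),
\]
and since the bracketed factor is $>0$, we conclude $H(t_0,x_0)\leq 0$, as claimed.

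The main obstacle is making the maximum-principle step rigorous in the weak/distributional setting: \eqref{eq:lH} holds only after integration against nonnegative test functions $\phi\in H^1_0((0,T)\times M)$, and $G$ (hence $H$) need not be classically smooth because $u$ is only a weak solution and the Finsler Laplacian is nonlinear with a singularity along $M_u$. One handles this by the usual localization-and-mollification device: choose $\phi$ approximating a spacetime bump concentrated near $(t_0,x_0)$, use that $H\leq H(t_0,x_0)$ everywhere to bound the $\D^V H$ and $\pt H$ contributions in the right direction, and pass to the limit; alternatively invoke a parabolic strong maximum principle for the operator $\D^V + 2 d(\cdot)(\na f) - \pt$ on the Riemannian manifold $(M,g_V)$ along the lines already used in \cite{OS-14}. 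A secondary technical point is justifying the $t_0\to 0^+$ boundary case carefully — that $G$ is bounded above on each slice $\{t\}\times M$ uniformly enough that multiplication by $\lambda(t)\to 0$ kills it — which uses compactness of $M$ together with $(B_4)$ controlling the $\Psi$ term; one should also note the case where the supremum of $H$ is not attained at all, which is excluded because $[0,T]\times M$ is compact and $H$ extends upper-semicontinuously with the $t=0$ limit being $0$.
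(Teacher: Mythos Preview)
Your proposal is correct and follows essentially the same maximum-principle strategy as the paper. The paper phrases it as a proof by contradiction: assuming $H(t_0,x_0)>0$, conditions $(B_1),(B_2),(B_4)$ force $t_0>0$, and then $(B_3)$ makes the right-hand side of \eqref{eq:lH} strictly positive near $(t_0,x_0)$, so $H$ is a strict subsolution of the \emph{linear} parabolic operator $div_\mu(\na^V\cdot)+2d(\cdot)(\na f)-\pt$, whence the weak maximum principle forces the max onto the parabolic boundary of any small cylinder $[t_0-\delta,t_0]\times B_\delta(x_0)$ --- exactly the rigorous handling of what you flag as the ``main obstacle'' and then resolve by invoking the parabolic maximum principle as in \cite{OS-14}.
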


\begin{proof}
 Assume by contradiction that $H(t_0,x_0)>0$. From conditions $(B_1)$, $(B_2)$ and $(B_4)$, we can deduce that 
$\displaystyle\liminf_{t\rightarrow0^+}H\leqslant 0$ and thus $t_0>0$. On other hand, by $(B_3)$ ,
\begin{equation}
 \left(\frac{\be'(t_0)-2K^-\be(t_0)}{1-\be(t_0)}-(\ln \la)'(t_0) \right) H(t_0,x_0)>0
\end{equation}
on some neighborhood of $(t_0,x_0)$. Therefore, $H$ is a strict subsolution of the linear parabolic operator 
\begin{equation}
 div_\mu(\na^V H)+2dH(\na f)-\pt H, 
\end{equation}
on such a neighborhood. 

This implies that $H(t_0,x_0)$ is trictly less than the supremum of $H$
on the boundary of any small parabolic cylinder $[t_0-\delta, t_0]\times B_\delta(x_0)$, where 
$B_\delta(x_0)=\{x\in M\ ; \ d_F(x_0,x)<\delta\}$. Hence $(t_0,x_0)$ cannot be the maximum point of $H$.
\end{proof}

Hence, we have proved the following gradient estimate:
\begin{theorem}\label{thm:grad-esti}
 Let $(M,F,d\mu)$ be a closed Finsler space of dimension $n$ with weighted 
 Ricci curvature satisfying $Ric_N\geq K$ for some $N\in [n,\infty)$ and 
 $K\in \R$. Let $u:[0,T]\times M\ra \R$ be a positive solution of the associated heat equation. If there are function 
 $\la, \be, \Psi \in C^1((0,T])$ satisfying assumptions $(B_1)-(B_5)$, then for any $t\in (0,T)$, we have 
 \begin{equation}
  \be F(\na f)^2-\pt f \leq \psi,
 \end{equation}
 where $f:=\ln u$.
\end{theorem}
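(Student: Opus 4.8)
The plan is to combine the two lemmas already established. Recall we have defined $G = \be F(\na f)^2 - \pt f - \Psi$ and $H(t,x) = \la(t) G(t,x)$, and we have shown (Lemma, equation (\ref{eq:lH})) that $H$ satisfies the differential inequality
\[
 \D^V H + 2\, dH(\na f) - \pt H \geqslant \left(\frac{\be' - 2K^-\be}{1-\be} - (\ln\la)'\right) H
\]
in the distributional sense on $(0,T)$, together with the second Lemma asserting that $H(t_0,x_0) \leqslant 0$ at a maximum point $(t_0,x_0)$ of $H$ on $[0,T]\times M$. The conclusion is almost immediate: since $M$ is closed (compact without boundary) and $H$ is, for each fixed $t$, continuous on $M$, the maximum of $H$ on the compact set $[0,T]\times M$ is attained at some point $(t_0,x_0)$, so for \emph{every} $(t,x) \in [0,T]\times M$ we have $H(t,x) \leqslant H(t_0,x_0) \leqslant 0$.

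Next I would unwind the definitions. Fix $t \in (0,T)$. By condition $(B_2)$ we have $\la(t) > 0$, so $H(t,x) \leqslant 0$ forces $G(t,x) \leqslant 0$ for all $x \in M$, i.e.
\[
 \be(t) F(\na f)^2 - \pt f - \Psi(t) \leqslant 0 \qquad \text{for all } x \in M,
\]
which is exactly the claimed estimate $\be F(\na f)^2 - \pt f \leqslant \Psi$ (note the statement writes the right-hand side as $\psi$; this is the same function $\Psi$ appearing in hypotheses $(B_4)$--$(B_5)$). Since $t \in (0,T)$ was arbitrary, the estimate holds on all of $(0,T)$.

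I expect the only genuinely delicate point to be the justification of the maximum-principle step underlying the second Lemma — namely, that a distributional strict subsolution of the linear parabolic operator $\mathrm{div}_\mu(\na^V\,\cdot\,) + 2\,d(\,\cdot\,)(\na f) - \pt(\,\cdot\,)$ on a parabolic cylinder cannot attain an interior-in-time maximum, despite the fact that the coefficients of this operator (through $V = \na u_t$ and $\na f$) are merely measurable and the operator is only uniformly elliptic away from the zero set of $du$. This is the standard subtlety in the Finsler/weighted-Laplacian framework, and it is handled exactly as in Ohta--Sturm \cite{OS-14}: one works on the Riemannian manifold $(M, g_{V})$, uses local parabolic regularity to promote the weak subsolution to a function to which the classical strong maximum principle applies on small cylinders, and invokes conditions $(B_1)$--$(B_4)$ to rule out the maximum escaping to $t = 0$. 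Since that argument is precisely the content of the preceding Lemma, which I am entitled to assume, the remaining bookkeeping in the theorem itself is routine.
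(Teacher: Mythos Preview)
Your proposal is correct and matches the paper's approach exactly: the paper presents the two lemmas (the differential inequality for $H$ and the nonpositivity of $H$ at its maximum), then simply writes ``Hence, we have proved the following gradient estimate'' before stating Theorem~\ref{thm:grad-esti}. Your write-up merely makes explicit the trivial step of dividing $H\le 0$ by $\la(t)>0$ to obtain $G\le 0$, which the paper leaves implicit.
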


Following \cite{CYZ-17}, we obtain this immediate consequence of Theorem \ref{thm:grad-esti}:

\begin{corollary}
 Let $(M,F,d\mu)$ and $u$ be as in Theorem \ref{thm:grad-esti} with $K<0$. 
 Let $b\in C^1\big((0,T]\big)$ be a positive increasing function on $(0,T]$ such that 
 \begin{equation*}
  \lim_{t\rightarrow 0^+}b(t)=0\ and\ \frac{b'^2}{b}\in L^1\big([0,T]\big).
 \end{equation*}
Then,
\begin{equation*}
 \be F(\na f)-\pt f\leqslant \Psi,
\end{equation*}
where 
\begin{equation}
 \be = 1+\frac{2K}{b(t) e^{-2Kt}}\int_0^tb(s)e^{-2Ks}\ ds,
\end{equation}
and 
\begin{equation}
 \Psi=\frac{n}{8b}\int_0^t\frac{b'^2}{b\be}(s)\ ds.
\end{equation}

\end{corollary}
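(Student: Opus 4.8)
The plan is to apply Theorem \ref{thm:grad-esti} with the specific $\be$ and $\Psi$ built from $b$, so the whole task reduces to verifying that these functions satisfy $(B_1)$--$(B_5)$. First I would record the elementary consequences of the hypotheses on $b$: since $b>0$ is increasing with $b(0^+)=0$ and $K<0$, the quantity $\int_0^t b(s)e^{-2Ks}\,ds$ is positive, and one checks that the defining formula gives $\be\in(0,1)$ on $(0,T]$ — the lower bound $\be>0$ comes from comparing $\int_0^t b(s)e^{-2Ks}\,ds$ with a crude bound using monotonicity of $b$ and the explicit integral $\int_0^t e^{-2Ks}\,ds=\frac{e^{-2Kt}-1}{-2K}$, while $\be<1$ is immediate because $2K<0$ and the integral term is positive. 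That disposes of $(B_1)$. For $(B_2)$ I would take $\la$ with $\la(0^+)=0$, $\la>0$ — the natural choice being essentially $\la=b$ (or a power thereof), chosen afterward to make $(B_3)$ work; the point is that $(B_2)$ and $(B_4)$ (with $\Psi\to 0$ as $t\to0^+$, which follows from $b(0^+)=0$ and the $L^1$ assumption on $b'^2/b$ forcing the inner integral to be finite) are essentially free once $\be$ and $\Psi$ are pinned down.

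The computational heart is the identity that makes the differential relation for $\be$ transparent. Writing $h(t):=b(t)e^{-2Kt}$, so that $\be=1+\frac{2K}{h}\int_0^t h(s)\,ds$, one differentiates to get $\be'=-\frac{2K h'}{h^2}\int_0^t h(s)\,ds+2K = -\frac{h'}{h}(\be-1)+2K$, i.e.
\begin{equation*}
 \be'-2K = \frac{h'}{h}(1-\be).
\end{equation*}
Since $K^-=K$ here (as $K<0$), this reads $\frac{\be'-2K^-\be}{1-\be}=\frac{\be'-2K^-}{1-\be}+\frac{2K^-(1-\be)-(-2K^-)(1-\be)}{(1-\be)^2}$... more directly: $\be'-2K^-\be = (\be'-2K^-)+2K^-(1-\be) = \frac{h'}{h}(1-\be)+2K(1-\be) = (1-\be)\big(\frac{h'}{h}+2K\big) = (1-\be)\frac{b'}{b}$, using $\frac{h'}{h}=\frac{b'}{b}-2K$. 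Hence
\begin{equation*}
 \frac{\be'-2K^-\be}{1-\be}=\frac{b'}{b}=(\ln b)'.
\end{equation*}
With this closed form, $(B_3)$ becomes $(\ln b)'-(\ln\la)'>0$, so choosing $\la$ to be a small enough power of $b$, say $\la=b^{\kappa}$ with $\kappa\in(0,1)$, or more simply arranging $\la$ so that $(\ln(b/\la))'>0$, makes $(B_3)$ hold; one must check the chosen $\la$ still obeys $(B_2)$, which it does. I expect this algebraic simplification to be the step most easily gotten wrong — keeping track of the $K^-=K$ identification and the cancellations in $\be'-2K^-\be$ is where care is needed.

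Finally, $(B_5)$ becomes, after substituting $\frac{\be'-2K^-\be}{1-\be}=(\ln b)'=\frac{b'}{b}$,
\begin{equation*}
 \Psi'+\frac{b'}{b}\Psi-\frac{N\,(b')^2/b^2}{8\be}\geq 0,
\end{equation*}
(using $\frac{(\be'-2K^-\be)^2}{\be(1-\be)^2}=\frac{(b'/b)^2}{\be}$ from the same identity). Multiplying by $b$ and recognizing $(b\Psi)'=b\Psi'+b'\Psi$, this is exactly $(b\Psi)'\geq \frac{N (b')^2}{8b\be}$, which is satisfied with equality by the stated choice $\Psi=\frac{n}{8b}\int_0^t\frac{(b')^2}{b\be}\,ds$ provided $N=n$ — and for general $N\in[n,\infty)$ the inequality direction still holds since the left side with this $\Psi$ equals $\frac{n(b')^2}{8b\be}\leq\frac{N(b')^2}{8b\be}$. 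The integrability hypothesis $(b')^2/b\in L^1$ together with $\be$ being bounded below away from $0$ on any $[\epsilon,T]$ and $\be\to 1$ as $t\to 0^+$ guarantees the defining integral for $\Psi$ converges, so $\Psi\in C^1((0,T])$ and $\Psi\to 0$, giving $(B_4)$. With $(B_1)$--$(B_5)$ all verified, Theorem \ref{thm:grad-esti} yields $\be F(\na f)^2-\pt f\leq\Psi$ (note the corollary's statement drops the square, presumably a typo), completing the proof.
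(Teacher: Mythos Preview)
Your overall strategy---verify $(B_1)$--$(B_5)$ for the specific $\be,\Psi$ and then invoke Theorem~\ref{thm:grad-esti}---is exactly what the paper intends: the paper gives no argument of its own, merely declaring the corollary an ``immediate consequence'' following \cite{CYZ-17}. Your key identity is correct and is the crux of the matter: setting $h=be^{-2Kt}$ one has $\be'-2K=\tfrac{h'}{h}(1-\be)$, whence $\be'-2K^{-}\be=(1-\be)\bigl(\tfrac{h'}{h}+2K\bigr)=(1-\be)\tfrac{b'}{b}$, so $\tfrac{\be'-2K^{-}\be}{1-\be}=(\ln b)'$. The checks of $(B_1)$, $(B_2)$, $(B_3)$ (with $\la=b^{\kappa}$, $0<\kappa<1$) and $(B_4)$ (note $\Psi\ge 0$ trivially, so $\limsup\Psi\ge 0$ needs no limit computation) are fine.

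There is, however, a genuine slip in your treatment of $(B_5)$. After your substitution the condition reads $(b\Psi)'\ge \dfrac{N(b')^{2}}{8b\be}$, and with the stated $\Psi$ one gets $(b\Psi)'=\dfrac{n(b')^{2}}{8b\be}$. You then assert that for $N\in[n,\infty)$ ``the inequality direction still holds since $\tfrac{n(b')^{2}}{8b\be}\le \tfrac{N(b')^{2}}{8b\be}$''---but that is the \emph{wrong} direction: you need the left-hand side to dominate, and $n\le N$ gives the reverse. Thus $(B_5)$ is satisfied only when $N=n$. Either the corollary is tacitly restricted to $N=n$ (which in the Finsler weighted setting forces $S\equiv 0$ and is quite restrictive), or---more likely---the constant $n$ in the formula for $\Psi$ is a transcription artifact from the Riemannian source \cite{CYZ-17} and should be $N$; with $\Psi=\dfrac{N}{8b}\displaystyle\int_0^t\dfrac{(b')^2}{b\be}\,ds$ your computation gives equality in $(B_5)$ and the proof goes through for every $N\in[n,\infty)$. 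You should flag this rather than claim the inequality holds as written.
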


\begin{remark}
 Taking $b(t)=(1-\theta K t)t^{\frac{2}{\theta}-1}$ and $b(t)=\sinh^2(-Kt)+\cosh(-Kt)\sinh(-Kt)+Kt$, one obtains respectively 
 \begin{equation*}
  F(\na f)^2-(1-\theta K t)\pt f \leqslant \frac{n(2-\theta)^2}{16\theta(1-\theta)t}+\frac{nK^2\theta t}{4}-\frac{nK}{2},
 \end{equation*}

 and
 \begin{equation*}
  F(\na f)^2-\left(1+\frac{\sinh(-Kt)\cosh(-Kt)-Kt}{\sinh^2(-Kt)}\right)\pt f \leqslant \frac{-nK}{2}(\coth(-Kt)+1).
 \end{equation*}

\end{remark}

We can now derive our Harnack type inequality.

\begin{proposition}
 Under assumptions of Theorem \ref{thm:grad-esti}, we have, for any $x,y\in M$ and $0<s<t\leqslant T$, 
 \begin{equation}
  u(s,x)\leqslant u(t,y) \exp \left\{ \frac{d_F(y,x)^2}{4(t-s)^2}\int_s^t\frac{d\tau}{\be} + \int_s^t \Psi\ d\tau\right\},
 \end{equation}
where $d_F$ is the distance function induced by the Finsler metric $F$.
\end{proposition}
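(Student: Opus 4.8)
The plan is to deduce the Harnack inequality from the gradient estimate of Theorem~\ref{thm:grad-esti} by integrating along a suitable curve in spacetime joining $(s,x)$ to $(t,y)$. Recall that Theorem~\ref{thm:grad-esti} gives, pointwise on $(0,T]\times M$,
\[
  \be(\tau)\,F(\na f)^2 - \pt f \leqslant \Psi(\tau),
\]
with $f=\ln u$. The idea is to parametrize a path $\ga(\tau)$, $\tau\in[s,t]$, with $\ga(s)=x$ and $\ga(t)=y$, and to estimate the difference $f(t,y)-f(s,x) = \int_s^t \frac{d}{d\tau}\big(f(\tau,\ga(\tau))\big)\,d\tau$ from below.

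First I would compute, using the chain rule,
\[
  \frac{d}{d\tau}f(\tau,\ga(\tau)) = \pt f(\tau,\ga(\tau)) + df_{\ga(\tau)}(\dot\ga(\tau)).
\]
Since $F$ is only positively homogeneous one must be slightly careful: the natural bound is $df(\dot\ga)\geqslant -F^*(df)\,F(-\dot\ga) \geqslant -F(\na f)\,\tilde F(\dot\ga)$, where $\tilde F(v):=F(-v)$ is the reverse metric; for the purpose of the inequality it is cleanest to choose $\ga$ to be (the reverse of) a minimizing geodesic from $y$ to $x$ run backwards, or simply to absorb the asymmetry into the distance term, writing $d_F(y,x)$ as in the statement. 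Then $df_{\ga}(\dot\ga)\geqslant -F(\na f)\,|\dot\ga|$ where $|\dot\ga|$ denotes the relevant Finsler length element, and
\[
  \frac{d}{d\tau}f(\tau,\ga(\tau)) \geqslant \pt f - F(\na f)\,|\dot\ga|
  \geqslant \be F(\na f)^2 - \Psi - F(\na f)\,|\dot\ga|,
\]
using the gradient estimate to replace $\pt f$ by $\be F(\na f)^2-\Psi$. Now I apply the elementary inequality $\be a^2 - a\ell \geqslant -\frac{\ell^2}{4\be}$ (valid for $\be>0$, which holds by $(B_1)$) with $a=F(\na f)$ and $\ell=|\dot\ga|$, obtaining
\[
  \frac{d}{d\tau}f(\tau,\ga(\tau)) \geqslant -\frac{|\dot\ga(\tau)|^2}{4\be(\tau)} - \Psi(\tau).
\]

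Next I would choose $\ga$ to be the constant-speed reparametrization, over $[s,t]$, of a minimizing curve realizing $d_F(y,x)$, so that $|\dot\ga(\tau)| = \frac{d_F(y,x)}{t-s}$ is constant in $\tau$. Substituting and integrating from $s$ to $t$ yields
\[
  f(t,y) - f(s,x) \geqslant -\frac{d_F(y,x)^2}{4(t-s)^2}\int_s^t \frac{d\tau}{\be(\tau)} - \int_s^t \Psi(\tau)\,d\tau,
\]
and exponentiating (using $u=e^f$) gives exactly the claimed inequality $u(s,x)\leqslant u(t,y)\exp\{\cdots\}$. The main obstacle — really the only subtlety — is the asymmetry of the Finsler distance and the resulting care needed in bounding $df(\dot\ga)$ and in identifying which of $d_F(x,y)$, $d_F(y,x)$ appears; one must run the minimizing geodesic in the direction that makes the Cauchy--Schwarz-type bound $df(\dot\ga)\geqslant -F(\na f)F(\dot\ga)$ come out with the orientation matching $d_F(y,x)$ as written. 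A secondary technical point is that the gradient estimate and the chain-rule computation hold only in a weak/distributional sense and only on $M_u$ (where $df\neq0$), so strictly one should either work with a smooth approximation of $\ga$ and pass to the limit, or invoke the regularity theory for the heat flow that makes $f$ smooth on the relevant set; I would mention this but not belabor it, as it is standard.
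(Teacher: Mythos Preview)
Your proposal is correct and follows essentially the same route as the paper: integrate the derivative of $f$ along a constant-speed curve in spacetime from $(s,x)$ to $(t,y)$, bound $df(\dot\ga)\geqslant -F(\na f)F(-\dot\ga)$, apply the gradient estimate of Theorem~\ref{thm:grad-esti}, and optimize the resulting quadratic in $F(\na f)$. The paper makes the Finsler asymmetry explicit by taking $\eta(\tau)=\exp_y((t-\tau)v)$ so that $F(-\dot\eta)=d_F(y,x)/(t-s)$, which is precisely the choice you describe; your remarks on orientation and on regularity are apt but not elaborated in the paper.
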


\begin{proof}
  Consider the reverse curve $\eta:[s,t]\ni \tau \mapsto \eta(\tau):=\exp_y((t-\tau)v)\in M$ of the minimal geodesic joining 
 $y=\eta(t)$ to $x=\eta(s)$ where $v\in T_yM$ is a suitable vector. For any $\tau \in [s,t]$, we have $F(-\dot\eta(\tau))=d_F(y,x)/(t-s)$. 
 Let $f=\ln u$ and $\sigma(\tau)=-f(\tau,\eta(\tau))$. Then, according to Theorem \ref{thm:grad-esti}, we have
 \begin{eqnarray*}
  \sigma(t)-\sigma(s) &=& \int_s^t \dot\sigma(\tau)\ d\tau\\
                         &=& \int_s^t \{df(-\dot\eta)-\pt f\}\ d\tau\\
                         &\leqslant& \int_s^t \{ F(\na f)F(-\dot \eta)+\Psi-\be F(\na f)^2\}\ d\tau\\
                         &=& \int_s^t \{ F(\na f)\frac{d_F(y,x)}{t-s} + \Psi - \be F(\na f)^2\}\ d\tau\\
                         &\leqslant& \int_s^t \left\{\frac{1}{4\be}\frac{d_F(y,x)^2}{(t-s)^2}+\Psi \right\}\ d\tau 
 \end{eqnarray*}
where we used in the last line the fact that the polynome $P(x)=-\be x^2+\frac{d_F(y,x)}{t-s} x+ \Psi$ attains its maximum at 
$x_0=\frac{d_F(y,x)}{2\be(t-s)}$.
\end{proof}

\section{Gradient estimates with time dependant Finsler metrics}

 In this section, we consider a smooth manifold evolving along the Finsler Ricci flow. Our aim is 
 to prove a general gradient estimate for  positive solutions of the heat equation  under the Ricci flow, that is 
 functions $u\in L^2([0,T],H^1(M))\cap H^1([0,T],H^{-1}(M))$ satisfying 
  \begin{equation*}
  \int_M \phi.\pt u_t\ d\mu=-\int_M d\phi(\na u_t)\ d\mu,
  \end{equation*}
  where the gradient is with respect to the Finsler metric $F_t$. Let us observe that, here we didn't investigated about existence and 
  regularity of such solutions.


Let $(M,F)$ be a closed $n$-dimensional Finsler manifold endowed with a smooth mesure $d\mu$.
Let $(F_t)_{t\in[0,T]}$ be a solution of the Ricci flow on $M$ with $F_0=F$. Let us consider a positive 
solution $u:[0,T]\times M\rightarrow \R$ of the heat equation under the Ricci flow, that is 
$u\in L^2([0,T],H^1(M))\cap H^1([0,T],H^{-1}(M))$ and satisfies 
 \begin{equation*}
  \int_M \phi\cdot\pt u_t\ d\mu=-\int_M d\phi(\na u_t)\ d\mu,
 \end{equation*}
 where the gradient is taken with respect to the Finsler metric $F_t$ and let $f=\ln u$.

We have the following evolution equation
\begin{lemma}\cite{Lak-15}
Along the Finsler Ricci flow $(F_t)_{t\in[0,T]}$, it holds
 \begin{equation}
 \pt(F(\na f)^2)= 2d(\pt f)(\na f)+ 2Ric^{ij}(\na f)f_if_j.
\end{equation}
\end{lemma}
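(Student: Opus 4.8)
The plan is to fix local coordinates $(x^i)$ and use the representation $F(\na f)^2=F^*(df)^2=g^{*ij}(x,df)\,f_if_j$, where $f_i=\pa f/\pa x^i$ and $g^{*ij}(x,\al)$ is the fundamental tensor of the dual norm $F_t^*$, which under the Legendre correspondence $y=\Le(\al)$ coincides with the inverse $g^{ij}(x,y)$ of $g_{ij}(x,y)$. In this expression only $g^{*ij}$ and the argument $df$ carry the time dependence induced by the flow, since the one-form $df$ is metric-independent. Applying $\pt$ and splitting according to the three sources of dependence gives
\begin{equation*}
 \pt\big(F(\na f)^2\big)=\big(\pt g^{*ij}\big)\big|_{df}\,f_if_j+\left.\frac{\pa g^{*ij}}{\pa\al_k}\right|_{df}(\pt f_k)\,f_if_j+2\,g^{*ij}(df)\,(\pt f_i)\,f_j,
\end{equation*}
where in the first term the derivative is taken with the covector argument frozen at $df$.

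First I would dispose of the middle term. The dual Cartan tensor $C^{*ijk}:=\tfrac{1}{2}F^*\,\pa g^{*ij}/\pa\al_k$ is totally symmetric in its three indices and annihilates the covector $\al$ in every slot, a consequence of the $0$-homogeneity of $g^{*ij}$ in $\al$; evaluating at $\al=df$ yields $\big(\pa g^{*ij}/\pa\al_k\big)\big|_{df}f_i=0$, so the middle term vanishes. Next, since $(\na f)^i=g^{*ij}(df)f_j$ and $\pt f_i=\pa_i(\pt f)$, the last term is exactly
\begin{equation*}
 2\,g^{*ij}(df)\,(\pt f_i)\,f_j=2\,(\pt f)_i(\na f)^i=2\,d(\pt f)(\na f).
\end{equation*}

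For the first term I would differentiate the identity $g^{*ik}(x,\al)\,g_{kj}(x,y)=\delta^i_j$. Varying $t$ with $\al=df$ frozen moves the corresponding vector $y=\Le(df)$, but the resulting extra contribution is proportional to $\pa g^{ij}/\pa y^k$, and $\pa g^{ij}/\pa y^k$ contracted with $f_j=g_{jl}(\na f)(\na f)^l$ vanishes by the symmetry of the Cartan tensor of $g_{\na f}$; hence that contribution disappears after contraction with $f_if_j$. So $\big(\pt g^{*ij}\big)\big|_{df}f_if_j=-g^{*ik}g^{*jl}\big(\pt g_{kl}\big)f_if_j$, and Bao's Finsler Ricci flow equation $\pt g_{ij}=-2\,Ric_{ij}$ recalled in Section~\ref{sec:02} turns this into $2\,g^{*ik}(df)g^{*jl}(df)Ric_{kl}(\na f)\,f_if_j=2\,Ric^{ij}(\na f)\,f_if_j$. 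Adding the three contributions produces the asserted identity; as a sanity check, when the flow is stationary the Ricci term drops and one recovers the static relation $\pt(F(\na f)^2)=2\,d(\pt f)(\na f)$ of \cite{OS-14}.

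The main obstacle is exactly the point handled in the previous paragraph: one must carefully separate, in $\pt\big(g^{*ij}(x,df)\big)$, the ``frozen covector'' derivative from the contribution coming from the motion of $df$, observe that freezing $df$ is not the same as freezing $\na f$ once the metric evolves, and verify that this discrepancy — together with every other term involving an $\al$- or $y$-derivative of the fundamental tensor — is annihilated upon contraction with $f_if_j$ by the total symmetry and homogeneity of the Cartan tensor. Once this algebraic reduction is in place, the three remaining identifications are the immediate ones indicated above.
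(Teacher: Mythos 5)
Your computation is correct: the decomposition of $\pt\big(g^{*ij}(x,df)\,f_if_j\big)$ into the frozen-metric term, the Cartan-tensor term (killed by total symmetry and $0$-homogeneity of $\pa g^{*ij}/\pa\al_k$ contracted with $f_i$), and the term $2g^{*ij}(df)(\pt f_i)f_j=2d(\pt f)(\na f)$, together with $\pt g^{ij}=2Ric^{ij}$ from Bao's flow equation, is exactly the standard argument, and you correctly handle the only delicate point, namely that freezing $df$ still moves $y=\Le(df)$ but the resulting $\pa g^{ij}/\pa y^k$ contribution dies against $f_if_j$ by the Cartan identity. The paper itself gives no proof of this lemma, merely citing \cite{Lak-15}, where the proof is the same pointwise coordinate computation; your dual-metric formulation via $F^*(df)^2$ is, if anything, slightly cleaner since $df$ is metric-independent.
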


Let $\be, \la \in C^1((0,T])$ be three functions satisfying the following assumptions:
 \begin{itemize}
  \item[$(C_1)$] $\be (t)\in (0,1)$, $\fl\ t \in (0,T]$;
  \item[$(C_2)$] $\displaystyle \lim_{t\rightarrow 0^+} \la(t)=0$ and $\la(t)>0$, $\fl\ t \in (0,T]$;
  \item[$(C_3)$] $\displaystyle\frac{2\be'}{1-\be}-(\ln \la)'<0$ on $(0,T]$. 
 \end{itemize}

On $[0,T]\times M$, define $G:=\be F(\na f)^2-\pt f$ and $H:=\la G$. 

\begin{lemma}
 It holds
 \begin{equation}\label{eq:H}
  \D^V H+ 2dH(\na f)-\pt H= \left(\frac{\be'}{1-\be}-(\ln \la)'\right)H + \la I
 \end{equation}
 in the sense of distribution on $(0,T)\times M$, where 
\begin{equation}
 I(t,x):=\frac{\be'}{1-\be}\D f+ 2(1-\be)Ric^{ij}(\na f)f_if_j +2\be(Ric(\na f)+\dot S(\na f)+\|\na^2 f\|^2)+2Ric^{ij}(\na f)f_{ij}
\end{equation}
\end{lemma}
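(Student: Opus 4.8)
The plan is to mimic the computation in Section \ref{sect:3}, but carry along the extra terms coming from the time–dependence of the metric. First I would recall the three operators involved — the weighted Laplacian $\D^V$, the drift term $2d(\cdot)(\na f)$, and $\pt$ — and compute the action of the operator $L := \D^V + 2d(\cdot)(\na f) - \pt$ on the two building blocks $F(\na f)^2$ and $\pt f$ separately, in the distributional sense against a nonnegative test function $\phi \in H^1_0((0,T)\times M)$. For the first block I would use the Finslerian Bochner--Weitzenb\"ock identity (the equality version, since we want to keep the Hessian term $\|\na^2 f\|^2$) together with the evolution equation from the lemma of Lakzian, namely $\pt(F(\na f)^2) = 2d(\pt f)(\na f) + 2\,Ric^{ij}(\na f)f_if_j$. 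The crucial cancellation is the same as in the static case: $-\pt(F(\na f)^2)$ produces $-2d(\pt f)(\na f)$, which annihilates a $2d(\pt f)(\na f)$ coming from the drift term applied after rewriting $\pt f = \D f + F(\na f)^2$ via \eqref{eq:l-log}, leaving behind the Ricci and Hessian contributions plus the $-2\,Ric^{ij}(\na f)f_if_j$ term.

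Next I would handle the block $\pt f$. Here one differentiates the identity $\pt f = \D f + F(\na f)^2$ in $t$ and uses that $L(\pt f)$ reduces, after the same cancellations, to terms involving $d(\D f)(\na f)$, the evolution of the Laplacian under the flow (which is where $Ric^{ij}(\na f)f_{ij}$ and the $S$-curvature/distortion terms enter, since $\D f = \mathrm{tr}_{g_{\na f}}(\na^2 f) - S(\na f)$), and the contracted second Bianchi-type terms. This is exactly the step where the hypotheses on the $S$-curvature ($S = \si F_t + d\varphi$) and on $d\si$, $\mathrm{Hess}\,\varphi$, $Ric_{ij}$ would be invoked later to bound $I$; at the present stage I only need to collect these terms honestly into the quantity $I(t,x)$ as displayed.

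Combining, $L(G) = L(\be F(\na f)^2 - \pt f)$ picks up in addition the terms $\be' F(\na f)^2$ from differentiating the coefficient $\be(t)$. I would then substitute $F(\na f)^2 = \frac{1}{1-\be}(\D f + G + \pt f) - \frac{\be}{1-\be}F(\na f)^2$ — more precisely use $\be F(\na f)^2 = \pt f - \D f$ rearranged as needed — to trade the bare $F(\na f)^2$ factor for $\frac{1}{1-\be}(\text{something})$, which is what manufactures the coefficient $\frac{\be'}{1-\be}$ in front of $G$ and in front of the $\D f$ appearing in $I$. Finally, multiplying through by $\la(t)$ and using $\pt H = \la' G + \la \pt G$, the term $\la' G = (\ln\la)'\la G = (\ln\la)' H$ is moved to the right-hand side, producing the advertised coefficient $\frac{\be'}{1-\be} - (\ln\la)'$ on $H$, with the leftover $\la I$. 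Throughout, all identities are read distributionally: the Hessian-type terms are handled as in the static lemma, where one integrates $-d\phi(\na^V(\cdot))$ by parts, so no pointwise smoothness of $\na f$ is needed off $M_u$.

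The main obstacle I expect is the bookkeeping of the time derivative of the Laplacian along the Finsler Ricci flow: unlike the Riemannian case, $\D$ is a nonlinear operator, so $\pt(\D f)$ must be computed through the weighted Laplacian $\D^{\na f} = \mathrm{tr}_{g_{\na f}}(\na^2\cdot) - S(\na f)$, and both the metric $g_{\na f}$ (hence the Chern connection coefficients and the trace) and the $S$-curvature depend on $t$. Getting the precise combination $2\be(Ric(\na f) + \dot S(\na f) + \|\na^2 f\|^2) + 2\,Ric^{ij}(\na f)f_{ij} + 2(1-\be)Ric^{ij}(\na f)f_if_j$ to fall out correctly — in particular keeping track of the sign and placement of the $\dot S$ and $S^2$ pieces relative to the Bochner formula — is the delicate part; everything else is a structural repeat of the proof of \eqref{eq:lG}.
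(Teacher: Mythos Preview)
Your overall strategy---compute $L:=\D^V+2d(\cdot)(\na f)-\pt$ separately on $F(\na f)^2$ and on $\pt f$, invoke the equality Bochner--Weitzenb\"ock formula and Lakzian's evolution lemma, then recombine and substitute $-(1-\be)F(\na f)^2=G+\D f$ before multiplying by $\la$---is exactly the paper's. The one genuine methodological difference lies in how the $\pt f$ block is handled. You propose to compute $\pt(\D f)$ head-on via $\D f=\mathrm{tr}_{g_{\na f}}(\na^2 f)-S(\na f)$ and track the time derivative of the metric, connection, and $S$-curvature. The paper instead never computes $\pt(\D f)$ pointwise: it first records the identity
\[
\pt\big(d\phi(\na f)\big)=d(\pt\phi)(\na f)+d\phi(\na^V(\pt f))+2\,Ric^{ij}(\na f)\phi_i f_j,
\]
coming from $\pt\Le$, and uses it to convert $-d\phi(\na^V(\pt f))-\phi\,\pt(\D f)$ into a total time derivative (which integrates to zero) plus the single extra term $2\,Ric^{ij}\phi_i f_j$, then integrates that by parts in space to produce $-2\phi\,Ric^{ij}f_{ij}$. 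This is where the $Ric^{ij}f_{ij}$ in $I$ actually originates, not from differentiating $\mathrm{tr}_{g_{\na f}}(\na^2 f)$; and the $\dot S$ term comes solely from the $Ric_\infty$ in the Bochner identity applied to the $F(\na f)^2$ block, not from the $\pt f$ block. Your route is viable and leads to the same bookkeeping (both approaches ultimately hinge on the divergence of $Ric^{ij}f_j$), but the paper's weak-formulation trick sidesteps the nonlinearity of $\D$ that you flag as the main obstacle. Two small corrections: your substitution identity is misstated---the correct one is $F(\na f)^2=-\frac{G+\D f}{1-\be}$, whence $-\be' F(\na f)^2=\frac{\be'}{1-\be}(G+\D f)$; and there are no $S^2$ pieces in $I$, since the Bochner equality uses $Ric_\infty=Ric+\dot S$, not $Ric_N$.
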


\begin{proof}
  
We have
\begin{eqnarray}
\pt(d\phi(\na f))&=& d(\pt \phi)(\na f)+ d\phi(\pt(\Le (df)))\nonumber \\
                 &=& d(\pt \phi)(\na f)+ d\phi(\Le(d(\pt f)))+ d\phi((\pt\Le)(\na f))\nonumber\\
                 &=& d(\pt \phi)(\na f)+ d\phi(\na(\pt f))+ 2 g^{jk} Ric^i_k(\na f) \phi_if_j\nonumber\\
                 &=& d(\pt \phi)(\na f)+ d\phi(\na(\pt f))+ 2 Ric^{ij}(\na f)\phi_if_j
\end{eqnarray}

Using this relation, we compute
 
 \begin{eqnarray}
   & &\int_0^T\!\!\!\int_M\{ -d\phi(\na^V(\pt f))+ 2\phi d(\pt f)(\na f)-\phi \pt(\pt f)\}\ d\mu dt\nonumber \\
          &=& \int_0^T\!\!\!\int_M\{ -d\phi(\na^V(\pt f)) + \phi [2 d(\pt f)(\na f) -\pt(F(\na f)^2)-\pt(\D f)]\}\nonumber d\mu dt\\
          &=& \int_0^T\!\!\!\int_M\{ -d\phi(\na^V(\pt f)) - \phi [2 Ric^{ij}(\na f)f_if_j+\pt(\D f)]\}\ d\mu dt\nonumber \\
          &=& \int_0^T\!\!\!\int_M\{ -\pt(d\phi(\na f))+d(\pt \phi)(\na f)+ 2 Ric^{ij}(\na f)\phi_if_j\nonumber\\
          &-&  \phi [2 Ric^{ij}(\na f)f_if_j+\pt(\D f)]\}\ d\mu dt\nonumber\\
          &=& \int_0^T\!\!\!\int_M\{ -\pt(d\phi(\na f)) + 2 Ric^{ij}(\na f)\phi_if_j- 2\phi Ric^{ij}(\na f)f_if_j\}\ d\mu dt\nonumber\\
          &=& \int_0^T\!\!\!\int_M\{ 2 Ric^{ij}(\na f)\phi_if_j- 2\phi Ric^{ij}(\na f)f_if_j\}\ d\mu dt\nonumber\\
          &=& \int_0^T\!\!\!\int_M -2\phi \{ Ric^{ij}(\na f)f_{ij} + Ric^{ij}(\na f)f_if_j\}\ d\mu dt,
 \end{eqnarray}
 
 and
 
 \begin{eqnarray*}
  & & \int_0^T\!\!\!\int_M\{ -d\phi(\na^V(F(\na f)^2))+ \phi [2d(F(\na f)^2)(\na f)- \pt(F(\na f)^2)]\}\ d\mu dt\nonumber \\
  &=& \int_0^T\!\!\!\int_M\{ -d\phi(\na^V(F(\na f)^2))+ 2\phi [d(F(\na f)^2)(\na f)-d(\pt f)(\na f)-Ric^{ij}f_if_j]\}\nonumber d\mu dt\\
  &=& \int_0^T\!\!\!\int_M\{ -d\phi(\na^V(F(\na f)^2))- 2\phi [d(\D f)(\na f)+Ric^{ij}f_if_j]\}\nonumber d\mu dt.
 \end{eqnarray*}

 From the Bochner-Weitzenbock formula, 
 we have 
 \begin{eqnarray}
  & & \int_0^T\int_M\{ -d\phi(\na^V(F(\na f)^2))+ \phi [2d(F(\na f)^2)(\na f)- \pt(F(\na f)^2)]\}\ d\mu dt\nonumber \\
  &=&  \int_0^T\int_M 2\phi\{Ric(\na f)+\dot S(\na f)+\|\na^2 f\|^2_{HS(\na f)}-Ric^{ij}(\na f)f_if_j\} \ d\mu dt.
 \end{eqnarray}

Hence
 \begin{eqnarray}
  & & \int_0^T\!\!\!\int_M\{ -d\phi(\na^VG)+ 2\phi dG(\na f)-\phi \pt(G)\}\ d\mu dt\nonumber \\
  &=& \int_0^T\!\!\!\int_M \phi \Big\{-\be'F(\na f)^2+ 2\be\left(Ric(\na f)+\dot S(\na f)+\|\na^2 f\|^2_{HS(\na f)}-Ric^{ij}(\na f)f_if_j)\right)\nonumber\\
  & &  +2 \left( Ric^{ij}(\na f)f_{ij} + Ric^{ij}(\na f)f_if_j\right) \Big\}\ d\mu dt\nonumber\\
  &=& \int_0^T\!\!\!\int_M \phi \Big\{ \frac{\be'}{1-\be}(G+\D f)+ 2(1-\be)Ric^{ij}(\na f)f_if_j\nonumber\\ 
  & & +2\be(Ric(\na f)+\dot S(\na f)+\|\na^2 f\|^2_{HS(\na f)}) +2Ric^{ij}(\na f)f_{ij}\Big\}\ d\mu dt.
 \end{eqnarray}

 Therefore,
 \begin{eqnarray}
  & & \int_0^T\!\!\!\int_M\{ -d\phi(\na^V H)+ 2\phi dH(\na f)-\phi \pt(H)\}\ d\mu dt\nonumber \\
  &=&  \int_0^T\!\!\!\int_M \left\{-\phi\la'G +\la\left(-d\phi(\na^VG)+ 2\phi dG(\na f)-2\phi \pt(G)\right)\right\}\ d\mu dt.
 \end{eqnarray}
\end{proof}

Now, we assume that there exists some positive constant $K_1,K_2,K_2,K_4$ such that for all $t\in[0,T]$, 
\begin{itemize}
 \item  the Ricci curvature of $(M,F_t)$ satifies in any $g_v$-orthonormal basis,
 \begin{equation}
  -K_1\leq (Ric_{ij}(v))_{i,j=1}^n\leq K_2,
 \end{equation}
 as a quadratic form on $T_xM$, for all $v\in T_xM\bs\{0\}$, 
 \item $(M,F_t,d\mu)$ has isotropic $S$-curvature, $S_t:=\si F_t + d\varphi$, 
where $\si$ and $\varphi$ are time dependent functions which verify 
\begin{equation}\label{eq:is-curv}
 -K_3 F_t\leq d\si_t \hspace{1cm} and \hspace{1cm} -K_4 F^2_t\leq Hess\ \varphi_t.
\end{equation}
\end{itemize}

\begin{lemma}
  The function $I$ satisfies
  \begin{eqnarray}\label{eq:I}
   I(t,x)&\geqslant& \frac{\be}{n}\left(F(\na f)^2-\pt f\right)^2 + \frac{\be'}{1-\be}\left(F(\na f)^2-\pt f\right)\nonumber\\
         &-& C_1 F(\na f)^2-\frac{n^2C_2}{\be}-2\be C_3,
  \end{eqnarray}
  with $C_1=K_1$, $C_2=\max\{K_1^2,K_2^2\}$ and $C_3=K_3+K_4$.
\end{lemma}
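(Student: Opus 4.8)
The plan is to start from the explicit expression for $I$ derived in the preceding lemma, namely
\[
 I=\frac{\be'}{1-\be}\D f+ 2(1-\be)Ric^{ij}(\na f)f_if_j +2\be\bigl(Ric(\na f)+\dot S(\na f)+\|\na^2 f\|^2\bigr)+2Ric^{ij}(\na f)f_{ij},
\]
and to bound each of the terms that are not manifestly good. Using \eqref{eq:l-log} in the form $\D f=\pt f-F(\na f)^2$, the first term becomes $-\frac{\be'}{1-\be}\bigl(F(\na f)^2-\pt f\bigr)$, which accounts for the second term on the right-hand side of \eqref{eq:I} (note the sign: the claimed inequality has $+\frac{\be'}{1-\be}(F(\na f)^2-\pt f)$, so I expect the $\be'$ contribution here to be reorganised against the $\be'$ that is hidden inside $\D f$ when one later replaces $\D f$ by $\pt f - F(\na f)^2$ — I will need to track this carefully). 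The genuinely positive term is $2\be\|\na^2 f\|^2_{HS(\na f)}$, and the crucial step is the Cauchy–Schwarz / trace inequality $\|\na^2 f\|^2_{HS(\na f)}\ge \frac{1}{n}\bigl(\mathrm{tr}_{g_{\na f}}\na^2 f\bigr)^2$ together with $\mathrm{tr}_{g_{\na f}}\na^2 f=\D f+S(\na f)$. The isotropic $S$-curvature hypothesis $S=\si F+d\varphi$ lets me control $S(\na f)$ and in particular absorb it, at the cost of a term of size $C$ times $F(\na f)^2$ via Young's inequality; this is how the $\frac{\be}{n}(F(\na f)^2-\pt f)^2$ term is produced and why a $-C_1 F(\na f)^2$ correction appears.

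Next I would estimate the curvature terms. The term $2\be\bigl(Ric(\na f)+\dot S(\na f)\bigr)$ is bounded below: $Ric(\na f)\ge -K_1 F(\na f)^2$ from the lower Ricci bound, while $\dot S(\na f)$ is controlled by the hypotheses \eqref{eq:is-curv} on $d\si$ and $\mathrm{Hess}\,\varphi$ — here $\dot S$ is essentially $F^{-2}$ times the derivative of $S$ along the geodesic, and differentiating $S=\si F+d\varphi$ and using $-K_3F\le d\si$, $-K_4F^2\le\mathrm{Hess}\,\varphi$ gives $\dot S(\na f)\ge -(K_3+K_4)F(\na f)^2$, i.e. a contribution $\ge -2\be C_3 F(\na f)^2$. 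Wait — the claimed bound has $-2\be C_3$ with no $F(\na f)^2$ factor, so I should double-check whether $\dot S$ is already normalised by $F^2$ in the definition; per the Preliminaries $\dot S(y)=\frac{1}{F(x,y)^2}\frac{d}{dt}[S(\ga,\dot\ga)]$, so indeed $\dot S(\na f)\ge -(K_3+K_4)=-C_3$ directly, giving the stated $-2\be C_3$. The mixed terms $2(1-\be)Ric^{ij}(\na f)f_if_j$ and $2Ric^{ij}(\na f)f_{ij}$ are the ones requiring care: the first is $\ge -2(1-\be)K_1 F(\na f)^4/F(\na f)^2$-type... more precisely, $Ric^{ij}(\na f)f_if_j=Ric^{ij}(\na f)(df)_i(df)_j\ge -K_1 F(\na f)^2$ after raising indices appropriately, contributing $\ge -2(1-\be)K_1 F(\na f)^2$; combined with the earlier $-K_1F(\na f)^2$ pieces this collapses to the single $-C_1F(\na f)^2=-K_1F(\na f)^2$ in \eqref{eq:I}, using $0<\be<1$ to simplify the coefficients.

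The remaining and, I expect, hardest term is $2Ric^{ij}(\na f)f_{ij}$, since $f_{ij}$ (a component of the Hessian) is not sign-definite and appears linearly. The strategy is Young's inequality: split $2Ric^{ij}f_{ij}$ and absorb $|Ric^{ij}f_{ij}|$ partly into the positive Hessian term $2\be\|\na^2 f\|^2\ge \text{(something)}\cdot\sum f_{ij}^2$ and partly into a curvature term. Quantitatively, $|2Ric^{ij}f_{ij}|\le \epsilon\sum_{i,j}f_{ij}^2 + \frac{1}{\epsilon}\sum_{i,j}(Ric^{ij})^2\le \epsilon\|\na^2 f\|^2 + \frac{n^2}{\epsilon}C_2$ using $|Ric^{ij}|\le\max\{K_1,K_2\}$ entrywise so $\sum(Ric^{ij})^2\le n^2 C_2$; choosing $\epsilon$ proportional to $\be$ (so that $2\be\|\na^2 f\|^2$ still dominates and retains enough to run the trace inequality that yields $\frac{\be}{n}(F(\na f)^2-\pt f)^2$) produces exactly the $-\frac{n^2 C_2}{\be}$ term. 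The main obstacle is bookkeeping: one must reserve enough of the $2\be\|\na^2 f\|^2$ term both to absorb the $2Ric^{ij}f_{ij}$ cross term via Young and to apply $\|\na^2 f\|^2\ge\frac1n(\D f+S(\na f))^2$ afterward, and then convert $\D f=\pt f-F(\na f)^2$ while peeling off the $S(\na f)$ correction; keeping the constants consistent so that the final coefficient is exactly $\frac{\be}{n}$ and the error terms assemble into $-C_1F(\na f)^2-\frac{n^2C_2}{\be}-2\be C_3$ is where the care lies. I would organise the proof as: (1) substitute $\D f=\pt f-F(\na f)^2$ in the $\be'$ term; (2) lower-bound $2\be\|\na^2f\|^2$ using the trace inequality after subtracting the Young-absorbed share; (3) lower-bound $Ric(\na f)$, $\dot S(\na f)$, and the two $Ric^{ij}$-with-$f_if_j$ terms by $-($const$)F(\na f)^2$; (4) collect.
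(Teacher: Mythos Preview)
Your proposal is correct and follows essentially the same route as the paper: Young's inequality with parameter $\be$ to absorb $2Ric^{ij}(\na f)f_{ij}$ into half of the $2\be\|\na^2 f\|^2$ term, the Cauchy--Schwarz/trace inequality $\|\na^2 f\|^2\ge \frac{1}{n}(\D f)^2$ on the remaining half, the curvature and $\dot S$ lower bounds from the hypotheses, and the substitution $\D f=\pt f-F(\na f)^2$. The only cosmetic difference is that the paper passes to $g_{\na f}$-normal coordinates and writes $\sum_i f_{ii}=\D f$ directly (sidestepping the $S(\na f)$ correction you worry about), whereas you propose to handle that correction explicitly; your caution about the sign of the $\be'$ term and the constants is well placed, as the paper's own displayed proof in fact ends with $\frac{\be'}{1-\be}\D f$ and $-2C_1F(\na f)^2$ rather than the $+\frac{\be'}{1-\be}(F(\na f)^2-\pt f)$ and $-C_1F(\na f)^2$ appearing in the statement.
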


\begin{proof}
   By Young's inequality, we have
  \begin{equation}\label{eq:young}
   Ric^{ij}f_{ij}\leqslant \frac{1}{2\be}\sum_{i,j}(Ric^{ij})^2+\frac{\be}{2} \sum_{i,j} f_{ij}^2. 
  \end{equation}
Choose a nornal coordinate system $\{ x^i\}$ with respect to $g_{\na f}$ such that 
$\na f(x)=\frac{\partial}{\partial {x^1}}$ and for all $i,j$, $\Gamma^{1}_{ij}(\na f(x))=0$ . 
Therefore, one can easily check the following
\begin{equation}\label{eq:laplace-local}
 Ric^{ij}(\na f)=Ric_{ij}(\na f),\hspace{0.5cm}\|\na^2f\|_{HS(\na f)}=\sum_{i,j=1}^nf_{ij}^2,\quad \text{and} \hspace{0.5cm} \sum_{i=1}^n f_{ii}=\D f(x). 
\end{equation}
We have also
\begin{equation}\label{eq:laplace-arithm}
 \sum_{i,j=1}^nf_{ij}^2 \geqslant \sum_{i=1}^n f_{ii}^2 \geqslant \frac{1}{n}\left(\sum_{i=1}^n f_{ii}\right)^2= \frac{1}{n}(\D f)^2
\end{equation}

Using (\ref{eq:young})-(\ref{eq:laplace-arithm}) with estimates on hypothesis of the Theorem, we get 
\begin{eqnarray*}
 I(t,x)&=& \frac{\be'}{1-\be}\D f+ 2(1-\be)Ric^{ij}(\na f)f_if_j \\
       &  & +2\be(Ric(\na f)+\dot S(\na f)+\|\na^2 f\|^2)+2Ric^{ij}(\na f)f_{ij}\\
       &\geqslant& \frac{\be'}{1-\be}\D f+ 2(1-\be)(-K_1F(\na f)^2)\\
       &+ &2\be\left(-K_1F(\na f)^2-K_3-K_4+\sum_{i,j}f_{ij}^2\right)-\frac{1}{\be}\sum_{i,j}(Ric^{ij})^2-\be \sum_{i,j} f_{ij}^2\\
       &=& \frac{\be'}{1-\be}\D f -2K_1 F(\na f)^2+\be \sum_{i,j}f_{ij}^2-2\be(K_3+K_4) -\frac{1}{\be}\sum_{i,j}(Ric^{ij})^2\\
       &\geqslant& \frac{\be'}{1-\be}\D f -2C_1 F(\na f)^2 +\frac{\be}{n}(\D f)^2-2\be C_3 -\frac{1}{\be}n^2C_2,
\end{eqnarray*}
where we set $C_1=K_1$, $C_2=\max\{K_1^2,K_2^2\}$ and $C_3=K_3+K_4$.
\end{proof}

 Combining (\ref{eq:H})  and (\ref{eq:I}),  we obtain 
 \begin{equation}
  \D^V H+ 2dH(\na f)-\pt H\geqslant \left(\frac{\be'}{1-\be}-(\ln \la)'\right)H + \la J,
 \end{equation}
in the distributional sense on $(0,T)\times M$, where 
\begin{eqnarray*}
 J(t,x)&:=&\frac{\be}{n}\left(F(\na f)^2-\pt f\right)^2 + \frac{\be'}{1-\be}\left(F(\na f)^2-\pt f\right)
       - 2C_1 F(\na f)^2-\frac{n^2C_2}{\be}-2\be C_3.
\end{eqnarray*}
Let $(t_0,x_0)$ be the maximizer of $H$ on $[0,T]\times M$. Without lost of generality, we can assume $H(t_0,x_0)>0$,
since, otherwise Theorem \ref{thm:gradient-flow} is trivialy satisfed.
 From condition $(C_2)$, we have $t_0>0$. Then, at $(t_0,x_0)$ we have necessary 
 \begin{equation}\label{eq:HJ}
  \left(\frac{\be'}{1-\be}-(\ln \la)'\right)H + \la J\leqslant 0.
 \end{equation}
Indeed, if we assume the contrary, by an argument analogue to the proof of Theorem \ref{thm:grad-esti}, 
one can show that $H(t_0,x_0)$ could not be the supremum of $H$, that is a contradiction. 

 At $(t_0,x_0)$, we have 
 \begin{equation}\label{eq:H-final}
  H\leqslant \la \left[ \frac{n}{2\be}\left((\ln \la)'-\frac{2\be'}{1-\be}\right)+\frac{n(C_1+\be')}{2\be(1-\be)}+
  \frac{n^{3/2}\sqrt{C_2}}{\be} + \sqrt{2nC_3}\right].
 \end{equation}
 Indeed, let $w:=F(\na f)^2$ and $z:=\pt f$. We have 
\begin{eqnarray*}
 (w-z)^2&=&(\be w-z)^2+ 2(1-\be)w(\be w-z) +(1-\be)^2w^2\\
        &=& \frac{H^2}{\la^2}+2(1-\be)w\frac{H}{\la}+(1-\be)^2w^2.
\end{eqnarray*}
Then,
\begin{eqnarray*}
J&=& \frac{\be}{n}(w-z)^2+\frac{\be'}{1-\be}(w-z)-2C_1w-\frac{n^2C_2}{\be}-2\be C_3\\
 &=&\frac{\be}{n}\left[ \frac{H^2}{\la^2}+2(1-\be)w\frac{H}{\la}+(1-\be)^2w^2\right]+\frac{\be'}{1-\be}\frac{H}{\la}
    -\be'w-2C_1w-\frac{n^2C_2}{\be}-2\be C_3\\
 &=&\frac{\be}{n} \frac{H^2}{\la^2}+2\frac{\be}{n}(1-\be)w\frac{H}{\la} +\frac{\be'}{1-\be}\frac{H}{\la} +\frac{\be}{n}(1-\be)^2w^2
    -(2C_1+\be')w-\frac{n^2C_2}{\be}-2\be C_3\\
 &\geqslant& \frac{\be}{n} \frac{H^2}{\la^2}+\frac{\be'}{1-\be}\frac{H}{\la}-\frac{n(2C_1+\be')^2}{4\be(1-\be)^2}
 -\frac{n^2C_2}{\be}-2\be C_3,
\end{eqnarray*}
where used have been made of $ax^2+bx\geqslant -\dfrac{b^2}{4a}$, $\forall\ x\in \R$ for $a>0$, in the last line. 

Replacing this estimate in (\ref{eq:HJ}) yields 
at $(t_0,x_0)$,
\begin{equation}\label{eq:H-ineq}
 \frac{\be}{n}H^2+\la\left(\frac{2\be'}{1-\be}-(\ln \la)'\right)H-\la^2 \left(\frac{n(2C_1+\be')^2}{4\be(1-\be)^2}
 -\frac{n^2C_2}{\be}-2\be C_3\right) \leqslant 0.
\end{equation}
Remark that the left hand side of  (\ref{eq:H-ineq}) is a quadratic polynomial $P(x)=a x^2+b x+c$ in $H$ with $a>0$, $b< 0$ and $c<0$. 
Hence, using $x \leqslant \frac{-b+\sqrt{b^2-4ac}}{2a}\leqslant \frac{-b+\sqrt{-4ac}}{2a}$, one obtains (\ref{eq:H-final}).

\vspace{0.25cm}
This completes the proof of Theorem \ref{thm:gradient-flow} wich we state again  here

\begin{theorem}\label{thm:gradient-flow*}
 Let $(M,F,d\mu)$ be a closed Finsler space and $(F_t)_{t\in [0,T]}$ be a solution to the Finsler Ricci flow with $F_0=F$.
  Assume that there exists some positive constants $K_i$, $i=1,2,3,4$, such that for all $t\in [0,T]$, $F_t$ has isotropic $S$-curvature, 
 $S=\si F_t + d\varphi$ for some time dependent functions $\si$ and $\varphi$ with $-K_3 F_t\leq d\si$ and $-K_4 F_t^2\leq Hess\ \varphi$, 
 and its Ricci curvature  satisfies
 $-K_1\leq Ric_{ij}\leq K_2$.
  If $u:[0,T]\times M\rightarrow \R$ is a positive solution of the heat equation under the Finsler Ricci flow $(F_t)$ and 
 $\be, \la \in C^1((0,T])$ are functions satisfying assumptions $(C_1)-(C_3)$
 then  for any $t\in (0,T]$, we have,
 \begin{equation}
\be F(\na f)^2-\pt f \leq  \frac{n}{2\be}\left((\ln \la)'-\frac{2\be'}{1-\be}\right)+\frac{n(C_1+\be')}{2\be(1-\be)}+
  \frac{n^{3/2}\sqrt{C_2}}{\be}+\sqrt{2nC_3},
 \end{equation}
where $C_1:=K_1$, $C_2:=\max\{K_1^2,K_2^2\}$ and $C_3=K_3+K_4$ and $f=\ln u$.
\end{theorem}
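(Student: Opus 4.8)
The plan is to follow the template of the static case (Theorem~\ref{thm:grad-esti}): introduce the auxiliary function $H:=\la G$ with $G:=\be F(\na f)^2-\pt f$, establish a weak parabolic differential inequality for $H$ that encodes the curvature hypotheses, apply the maximum principle on the compact space-time cylinder $[0,T]\times M$, and then read off the estimate by solving a quadratic inequality in $H$ at a maximizer.

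The first step is the evolution identity for $H$. Along the Finsler Ricci flow the Legendre transform is time dependent, so differentiating $\pt(d\phi(\na f))$ contributes an extra Ricci term $2\,Ric^{ij}(\na f)\phi_if_j$; inserting this, together with Lakzian's evolution equation $\pt(F(\na f)^2)=2d(\pt f)(\na f)+2Ric^{ij}(\na f)f_if_j$ from \cite{Lak-15}, the relation \eqref{eq:l-log} (which still reads $\D f+F(\na f)^2=\pt f$), and the Finslerian Bochner-Weitzenbock formula into the weak formulation yields, for every nonnegative test function $\phi$, the identity \eqref{eq:H},
\[
\D^V H+2dH(\na f)-\pt H=\Big(\tfrac{\be'}{1-\be}-(\ln\la)'\Big)H+\la I,
\]
with $I$ the curvature-dependent quantity displayed there. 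Next I would bound $I$ from below, giving \eqref{eq:I}: working in a $g_{\na f}$-normal frame with $\na f=\pa/\pa x^1$ and $\Ga^1_{ij}(\na f)=0$, so that $Ric^{ij}(\na f)=Ric_{ij}(\na f)$, $\|\na^2 f\|^2_{HS(\na f)}=\sum f_{ij}^2$ and $\sum f_{ij}^2\ge\frac1n(\D f)^2$, one applies Young's inequality with weight exactly $\be$ to the sign-indefinite term $Ric^{ij}f_{ij}$ so that it is absorbed into $\be\sum f_{ij}^2$, and then feeds in the hypotheses $-K_1\le Ric_{ij}\le K_2$ and the isotropic $S$-curvature bounds \eqref{eq:is-curv} to control $Ric(\na f)+\dot S(\na f)$. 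This converts the identity into $\D^V H+2dH(\na f)-\pt H\ge(\tfrac{\be'}{1-\be}-(\ln\la)')H+\la J$, where $J$ is an explicit quadratic in $w:=F(\na f)^2$ and $z:=\pt f$.

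Finally I would invoke the maximum principle. Let $(t_0,x_0)$ maximize $H$ on $[0,T]\times M$; we may assume $H(t_0,x_0)>0$, since otherwise $G\le0$ everywhere and the estimate holds trivially, and then $(C_2)$ forces $t_0>0$. As in the proof of Theorem~\ref{thm:grad-esti}, if $(\tfrac{\be'}{1-\be}-(\ln\la)')H+\la J$ were strictly positive near $(t_0,x_0)$ then $H$ would be a strict subsolution there of the linear operator $\D^V\cdot+2d(\cdot)(\na f)-\pt(\cdot)$, contradicting maximality by comparison on a small parabolic cylinder; hence at $(t_0,x_0)$ we have $(\tfrac{\be'}{1-\be}-(\ln\la)')H+\la J\le0$. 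Expanding $(w-z)^2=(H/\la)^2+2(1-\be)w(H/\la)+(1-\be)^2w^2$ inside $J$ and completing the square in $w$ via $ax^2+bx\ge-b^2/4a$ eliminates $w$ and produces the quadratic inequality \eqref{eq:H-ineq} in $H$ alone, whose leading coefficient is $\be/n>0$, whose linear coefficient is negative by $(C_3)$, and whose constant term is negative; solving it and using $\sqrt{p+q}\le\sqrt p+\sqrt q$ gives \eqref{eq:H-final}, that is, the asserted bound for $\be F(\na f)^2-\pt f$ at $(t_0,x_0)$, which then propagates to all of $(0,T]\times M$ since $H=\la G$ is maximized there and $\la>0$. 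The main obstacle is deriving \eqref{eq:H}: one has to account carefully for the time dependence of the Legendre transform (the origin of the extra Ricci terms absent in the static case), maintain the whole computation in the distributional/weak formulation because the Finsler Laplacian $\D$ is nonlinear while only the weighted $\D^V$ is linear, and calibrate the Young weight to precisely $\be$ so that the uncontrolled Hessian term cancels instead of obstructing the estimate; the weak maximum-principle step, handled through the parabolic-cylinder comparison of Theorem~\ref{thm:grad-esti}, is the remaining technical point.
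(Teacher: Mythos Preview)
Your proposal is correct and follows essentially the same route as the paper: introduce $G=\be F(\na f)^2-\pt f$ and $H=\la G$, derive the weak identity \eqref{eq:H} via Lakzian's evolution formula and the Bochner--Weitzenb\"ock equality, bound $I$ below in a $g_{\na f}$-normal frame using Young's inequality with weight $\be$ together with the curvature and $S$-curvature hypotheses to obtain \eqref{eq:I}, then apply the parabolic maximum principle at a maximizer of $H$ to reach the quadratic inequality \eqref{eq:H-ineq} and solve it. The only cosmetic difference is in the last step, where the paper writes the root bound as $\sqrt{b^2-4ac}\le -b+\sqrt{-4ac}$ while you phrase it via $\sqrt{p+q}\le\sqrt p+\sqrt q$; the substance is the same.
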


\begin{remark}
When  all Finsler metrics $F_t$, $t\in [0,T]$, have vanishing $S$-curvature, we have 
 \begin{equation}\label{eq:v-grad-flow}
\be F(\na f)^2-\pt f \leq  \frac{n}{2\be}\left((\ln \la)'-\frac{2\be'}{1-\be}\right)+\frac{n(C_1+\be')}{2\be(1-\be)}+
  \frac{n^{3/2}\sqrt{C_2}}{\be},
 \end{equation}
 Particulary, taking $\be=\frac{1}{\theta}$ as constant function $(\theta >1)$ and $\la(t)=t$ in  (\ref{eq:v-grad-flow}), one obtains 
 \begin{equation*}
  F(\na f)^2-\theta \pt f \leq \frac{n\theta^2}{2t}+ \frac{nC_1\theta^3}{2(\theta-1)} +n^{3/2}\theta^2\sqrt{C_2}.
 \end{equation*}
This estimate improves Lakzian result \cite[Theorem 1.1]{Lak-15}.
\end{remark}

\begin{corollary}\label{cor:flow}
 Let $(F_t)_{[0,t)}$ be a Finsler Ricci flow on a closed $n$-dimensional manifold $M$ such that 
 each Finsler metric $F_t$ has Ricci curvature satisfying 
 $0\leq Ric_{ij}\leq C$ for some positive constnate $C$ and isotropic $S$-curvature $S_t=\si F_t+d\varphi$ where 
 $\si$ and $\varphi$ satisfy assumption (\ref{eq:is-curv}).
 
 Let $b\in C^1((0,T])$ be a positive  increasing function such that $\lim_{t\rightarrow 0^+}b(t)=0$. 
 Then a positive solution $u$ of the heat equation under the Ricci flow 
 satisfies
 \begin{equation}
  F(\na f)^2-(1+b)\pt f\leq n(1+b)^2\left(\frac{b'}{b}+C\sqrt{n}+\frac{\sqrt{2nC_3}}{n(1+b)}\right),
 \end{equation}
where $f=\ln u$.

Particulary, if all Finsler metrics $F_t$ have  vanishing $S$-curvature, then we have 
\begin{equation}
 F(\na f)^2-(1+b)\pt f\leq n(1+b)^2\left(\frac{b'}{b}+C\sqrt{n}\right)
\end{equation}

\end{corollary}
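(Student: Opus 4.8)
The plan is to specialize Theorem~\ref{thm:gradient-flow*} to the concrete choice $\be(t)=\frac{1}{1+b(t)}$ and $\la(t)=b(t)$, and then simplify. First I would verify the three hypotheses $(C_1)$--$(C_3)$. Since $b$ is positive on $(0,T]$, we have $\be(t)=\frac{1}{1+b(t)}\in(0,1)$, so $(C_1)$ holds; since $\lim_{t\to0^+}b(t)=0$ and $b>0$, condition $(C_2)$ holds with $\la=b$; for $(C_3)$ note that $\be'=-\frac{b'}{(1+b)^2}$ and $1-\be=\frac{b}{1+b}$, so $\frac{2\be'}{1-\be}=-\frac{2b'}{b(1+b)}$, while $(\ln\la)'=\frac{b'}{b}$, hence $\frac{2\be'}{1-\be}-(\ln\la)'=-\frac{b'}{b}\big(\frac{2}{1+b}+1\big)<0$ because $b$ is increasing (so $b'\geq0$, and in fact $b'>0$ is needed for strictness — one may note $b$ increasing gives $b'\ge 0$ and the inequality is at worst non-strict, or invoke that a genuinely increasing function can be taken with $b'>0$; alternatively approximate). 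So the hypotheses of Theorem~\ref{thm:gradient-flow*} are met.

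Next I would substitute these choices into the conclusion of Theorem~\ref{thm:gradient-flow*}. On the left side, $\be F(\na f)^2-\pt f=\frac{1}{1+b}F(\na f)^2-\pt f$; multiplying the whole inequality by $1+b>0$ gives $F(\na f)^2-(1+b)\pt f$ on the left, which matches the target. On the right side I must compute $(1+b)$ times each of the four terms. The first term: $\frac{n}{2\be}\big((\ln\la)'-\frac{2\be'}{1-\be}\big)=\frac{n(1+b)}{2}\cdot\frac{b'}{b}\big(1+\frac{2}{1+b}\big)=\frac{n(1+b)}{2}\cdot\frac{b'}{b}\cdot\frac{3+b}{1+b}=\frac{n(3+b)b'}{2b}$; multiplied by $(1+b)$ this contributes $\frac{n(1+b)(3+b)b'}{2b}$. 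The second term $\frac{n(C_1+\be')}{2\be(1-\be)}$: here $2\be(1-\be)=\frac{2b}{(1+b)^2}$, so the term is $\frac{n(1+b)^2(C_1+\be')}{2b}=\frac{n(1+b)^2}{2b}\big(C_1-\frac{b'}{(1+b)^2}\big)=\frac{n(1+b)^2 C_1}{2b}-\frac{nb'}{2b}$; times $(1+b)$ gives $\frac{n(1+b)^3 C_1}{2b}-\frac{n(1+b)b'}{2b}$. The third term $\frac{n^{3/2}\sqrt{C_2}}{\be}=n^{3/2}(1+b)\sqrt{C_2}$; times $(1+b)$ gives $n^{3/2}(1+b)^2\sqrt{C_2}$. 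The fourth term $\sqrt{2nC_3}$ times $(1+b)$ gives $(1+b)\sqrt{2nC_3}$.

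The remaining work is bookkeeping: collect the $b'/b$ terms. Combining $\frac{n(1+b)(3+b)b'}{2b}-\frac{n(1+b)b'}{2b}=\frac{n(1+b)b'}{2b}\big((3+b)-1\big)=\frac{n(1+b)b'(2+b)}{2b}$. Since $2+b\le 2(1+b)$ for $b\ge0$, this is $\le n(1+b)^2\frac{b'}{b}$, which is exactly the $\frac{b'}{b}$ contribution in the target. With $C_1=K_1=C$ under the hypothesis $0\le Ric_{ij}\le C$ (so $C_1=C$), and using $C_2=\max\{K_1^2,K_2^2\}=C^2$, the second and third collected terms give $\frac{n(1+b)^3 C}{2b}$ — but wait, the target has $n(1+b)^2\cdot C\sqrt n=n^{3/2}(1+b)^2 C$ from the third term only, so one must check whether the second-term contribution $\frac{n(1+b)^3 C}{2b}$ is absorbed or dropped. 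I expect the intended statement uses a cruder bound, namely that one discards the non-negative lower bound $Ric_{ij}\ge 0$ differently (taking $C_1$ effectively $0$ when $K_1=0$): indeed if $0\le Ric_{ij}$ one has $-K_1\le Ric_{ij}$ with $K_1=0$, so $C_1=0$, and the troublesome $\frac{n(1+b)^3 C}{2b}$ term vanishes entirely. That is the key observation — the hypothesis is $0\le Ric_{ij}\le C$, so we may take $K_1=0$, hence $C_1=0$ and $C_2=C^2$, $C_3=K_3+K_4$. Then the right side becomes $n(1+b)^2\frac{b'}{b}+n^{3/2}(1+b)^2 C+(1+b)\sqrt{2nC_3}=n(1+b)^2\big(\frac{b'}{b}+C\sqrt n+\frac{\sqrt{2nC_3}}{n(1+b)}\big)$, as claimed. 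The last sentence of the corollary follows immediately by setting $C_3=0$ when the $S$-curvature vanishes.

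The main obstacle is purely organizational: getting every algebraic substitution right and, crucially, recognizing that the hypothesis $0\le Ric_{ij}$ lets one take $K_1=0$ (rather than $K_1=C$), which is what makes the $1/b$-type terms other than $b'/b$ disappear; without that observation the bound would not match. A minor secondary point is the strictness in $(C_3)$: since $b$ is only assumed increasing, $b'\ge0$ and the inequality in $(C_3)$ is non-strict at points where $b'=0$; this is harmless because the maximum-principle argument in the proof of Theorem~\ref{thm:gradient-flow*} only needs the inequality to go the right way, or one can note that an increasing $b$ with $\lim_{t\to0^+}b=0$ can be taken strictly increasing without loss of generality.
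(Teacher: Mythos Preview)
Your proposal is correct and follows exactly the paper's approach: the paper's proof consists of the single sentence ``The result follows by choosing $\be=\frac{1}{1+b}$ and $\la=b$ in Theorem~\ref{thm:gradient-flow*}. Here $C_1=0$ and $C_2=C^2$.'' You have simply written out the verification of $(C_1)$--$(C_3)$ and the algebraic simplification in full detail, including the key observation that the hypothesis $0\le Ric_{ij}$ allows one to take $K_1=0$ (hence $C_1=0$), and the harmless weakening $\frac{(1+b)(2+b)}{2}\le (1+b)^2$ needed to reach the stated form of the bound.
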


\begin{proof}
 The result follows by choosing $\be=\frac{1}{1+b}$ and $\la=b$ in Theorem \ref{thm:gradient-flow*}. 
 Here $C_1=0$ and $C_2=C^2$.
\end{proof}

\begin{remark}
Some examples of functions $b$ satisfying assumptions of Corollary \ref{cor:flow} are $b(t)=\te t$ with $\te>0$ and $b(t)=\sinh(t)$.
\end{remark}

\begin{proposition}
 Under hypothesis of Theorem \ref{thm:gradient-flow*}, we have, for any $x,y\in M$ and $0<s<t\leq T$, 
 \begin{equation}
  u(s,x)\leq u(t,y)\exp\left\{A(s,t)+\frac{B(s,x,t,y)}{4(t-s)}+(t-s)\sqrt{2nC_3}\right\},
 \end{equation}
where 
\begin{equation*}
 A(s,t)= (t-s) \int_0^1 \left\{\frac{n}{2\be}\left((\ln \la)'-\frac{2\be'}{1-\be}\right)+\frac{n(C_1+\be')}{2\be(1-\be)}
           +\frac{n^{3/2}\sqrt{C_2}}{\be}\right\}\ d\tau,
\end{equation*}
and 
\begin{equation*}
 B(s,x,t,y)=\inf_c \int_0^1 \frac{F_{\tilde\tau}(c(\tau),\dot c(\tau))^2}{\be(\tilde \tau)}\ d\tau,
\end{equation*}
where the infimum is taken over all the smooth paths $c:[0,1]\ra M$ satisfying $c(0)=y$  and $c(1)=x$.
\end{proposition}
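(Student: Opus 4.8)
The plan is to integrate the differential Harnack estimate of Theorem \ref{thm:gradient-flow*} along a suitable space-time path, exactly as in the proof of the Proposition following Theorem \ref{thm:grad-esti}, but now keeping track of the fact that the Finsler metric — and hence the length functional and the distance — varies with time. First I would fix $x,y\in M$ and $0<s<t\le T$, and for an arbitrary smooth curve $c:[0,1]\to M$ with $c(0)=y$, $c(1)=x$, reparametrise time by $\tilde\tau=\tilde\tau(\tau):=s+(t-s)(1-\tau)$ (or a convenient monotone reparametrisation) so that $\tau=0$ corresponds to time $t$ and $\tau=1$ to time $s$. Set $\eta(\tau):=\big(\tilde\tau(\tau),c(\tau)\big)$ and $\sigma(\tau):=-f\big(\tilde\tau(\tau),c(\tau)\big)$ with $f=\ln u$. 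Then $\dot\sigma(\tau)=(t-s)\,\pt f - df(\dot c)$, and integrating from $0$ to $1$ gives $\ln u(s,x)-\ln u(t,y)=\sigma(1)-\sigma(0)=\int_0^1\big[(t-s)\pt f - df(\dot c)\big]\,d\tau$.

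Next I would bound the integrand pointwise. By Theorem \ref{thm:gradient-flow*} we have $\pt f\le \be F(\na f)^2 - \Phi(\tilde\tau)$, where $\Phi(\tilde\tau)$ denotes the right-hand side bound $\frac{n}{2\be}\big((\ln\la)'-\frac{2\be'}{1-\be}\big)+\frac{n(C_1+\be')}{2\be(1-\be)}+\frac{n^{3/2}\sqrt{C_2}}{\be}+\sqrt{2nC_3}$, all evaluated at $\tilde\tau$. Using the Cauchy--Schwarz / duality bound $df(\dot c)\le F^*_{\tilde\tau}(df)\,F_{\tilde\tau}(\dot c)=F_{\tilde\tau}(\na f)\,F_{\tilde\tau}(\dot c)$ (here $F_{\tilde\tau}$ is the metric at time $\tilde\tau$, and the norm of $df$ in the dual is $F(\na f)$ by definition of the Legendre transform), the integrand is bounded by
\begin{equation*}
(t-s)\big[\be F(\na f)^2 - \Phi\big] + F(\na f)\,F_{\tilde\tau}(\dot c).
\end{equation*}
Viewing this as a quadratic in the scalar $X:=F(\na f)\ge 0$, namely $-(t-s)\be X^2 + F_{\tilde\tau}(\dot c)\,X - (t-s)\Phi$, which is maximised at $X_0=\frac{F_{\tilde\tau}(\dot c)}{2(t-s)\be}$ with maximal value $\frac{F_{\tilde\tau}(\dot c)^2}{4(t-s)\be} - (t-s)\Phi$. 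Thus
\begin{equation*}
\ln\frac{u(s,x)}{u(t,y)} \le \int_0^1\Big[\frac{F_{\tilde\tau}(c(\tau),\dot c(\tau))^2}{4(t-s)\be(\tilde\tau)} - (t-s)\Phi(\tilde\tau)\Big]\,d\tau.
\end{equation*}

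Finally I would separate the two contributions: the term $-(t-s)\int_0^1\Phi(\tilde\tau)\,d\tau$ is independent of $c$ and, after splitting off the $\sqrt{2nC_3}$ piece (which contributes $(t-s)\sqrt{2nC_3}$, written outside with a $+$ sign since we moved $-\Phi$ to a bound), gives precisely $A(s,t)+(t-s)\sqrt{2nC_3}$ as defined in the statement; while the remaining term is $\frac{1}{4(t-s)}\int_0^1\frac{F_{\tilde\tau}(c,\dot c)^2}{\be(\tilde\tau)}\,d\tau$. Taking the infimum over all admissible paths $c$ turns this into $\frac{B(s,x,t,y)}{4(t-s)}$, and exponentiating yields the claimed inequality. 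The main obstacle — really the only subtle point — is making the space-time path argument rigorous: one must justify using a fixed smooth spatial curve $c$ together with the time-reparametrisation (so that the time-dependent length functional $\int_0^1 F_{\tilde\tau}(c,\dot c)^2/\be\,d\tau$ appears with the metric evaluated at the running time $\tilde\tau$), and one should note that Theorem \ref{thm:gradient-flow*} is applied at each time slice $\tilde\tau\in[s,t]$, which is licit since the hypotheses on $(F_t)$ hold uniformly on $[0,T]$; an approximation argument may be needed because $f$ is only a weak solution and $F(\na f)$ is defined pointwise a.e., but this parallels the static case and causes no new difficulty.
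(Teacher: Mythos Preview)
Your approach coincides with the paper's: integrate the gradient estimate of Theorem~\ref{thm:gradient-flow*} along a space--time path $\tau\mapsto(\tilde\tau,c(\tau))$, bound $df(\dot c)\le F_{\tilde\tau}(\na f)\,F_{\tilde\tau}(\dot c)$, optimise the resulting quadratic in $F_{\tilde\tau}(\na f)$, then take the infimum over admissible curves $c$ and exponentiate. The paper phrases the quadratic optimisation as Young's inequality, which is of course equivalent to your maximum-of-a-parabola step.

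However, your write-up contains sign slips that leave the intermediate steps inconsistent. With $\sigma(\tau)=-f(\tilde\tau,c(\tau))$ and $\tilde\tau=t-(t-s)\tau$ one has $\sigma(1)-\sigma(0)=\ln u(t,y)-\ln u(s,x)$, not $\ln u(s,x)-\ln u(t,y)$. The theorem says $\be F(\na f)^2-\pt f\le\Phi$, i.e.\ $-\pt f\le -\be F(\na f)^2+\Phi$, and \emph{not} $\pt f\le\be F(\na f)^2-\Phi$ as you wrote. Correspondingly the upper bound for the integrand should be $\dfrac{F_{\tilde\tau}(\dot c)^2}{4(t-s)\be}+(t-s)\Phi$ with a plus sign, so that $\int_0^1(t-s)\Phi\,d\tau$ already equals $A(s,t)+(t-s)\sqrt{2nC_3}$ without any ad hoc ``moving $-\Phi$ to a bound''. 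These three errors happen to cancel and you reach the right conclusion, but as written each step does not follow from the previous one. The paper sidesteps the confusion by working with $l(\tau)=+f(\tilde\tau,c(\tau))$, so that $\ln\frac{u(s,x)}{u(t,y)}=l(1)-l(0)$ and $l'(\tau)=df(\dot c)-(t-s)\pt f$ can be bounded from above directly using the two inequalities in their correct directions.
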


\begin{proof}
Let $c:[0,1]\ra M$ be a smooth curve with $x=c(1)$ and $y=c(0)$.
 Let $f=\ln u$ and define $l(\tau)=f(\tilde \tau,c(\tau))$ for $\tau\in [0,1]$ with $\tilde \tau:= (1-\tau)t+\tau s$. Then, we have 
 \begin{eqnarray*}
  \frac{\pa l(\tau)}{\pa \tau} &=& (t-s)\left(\frac{df_{c(\tau)}(\tilde\tau,\dot c(\tau))}{t-s}-\pt f(\tilde\tau,c(\tau))\right)\\
                               &\leq& (t-s)\left(\frac{F_{\tilde \tau}(c(\tau),\na f)F_{\tilde\tau}(c(\tau),\dot c(\tau))}{t-s}
                               -\pt f(\tilde\tau,c(\tau))\right)\\
                               &\leq& (t-s)\left(\frac{1}{2\be(\tilde \tau)}\frac{F_{\tilde\tau}(c(\tau),\dot c(\tau))^2}{2(t-s)^2}
                               +\frac{\be(\tilde\tau)}{2}\times 2 F_{\tilde \tau}(c(\tau),\na f)^2-\pt f(\tilde\tau,c(\tau))\right)\\
                               &\leq&\frac{F_{\tilde\tau}(c(\tau),\dot c(\tau))^2}{4\be(\tilde \tau)(t-s)}\\
                               &+& (t-s)\left\{\frac{n}{2\be}\left((\ln \la)'-\frac{2\be'}{1-\be}\right)+\frac{n(C_1+\be')}{2\be(1-\be)}
                               +\frac{n^{3/2}\sqrt{C_2}}{\be}+\sqrt{2nC_3}\right\}
 \end{eqnarray*}
Integration of this inequality yields
\begin{eqnarray*}
 \ln \frac{u(s,x)}{u(t,y)} &=& l(1)-l(0)= \int_0^1 \frac{\pa l(\tau)}{\pa \tau}\ d\tau\\
          &\leq& (t-s)\sqrt{2nC_3}+\frac{1}{4(t-s)} \int_0^1 \frac{F_{\tilde\tau}(c(\tau),\dot c(\tau))^2}{\be(\tilde \tau)}\ d\tau\\
          &+&(t-s) \int_0^1 \left\{\frac{n}{2\be}\left((\ln \la)'-\frac{2\be'}{1-\be}\right)+\frac{n(C_1+\be')}{2\be(1-\be)}
           +\frac{n^{3/2}\sqrt{C_2}}{\be}\right\}\ d\tau
\end{eqnarray*}
Exponentiating this inequality gives immediately the required estimate.
\end{proof}

\end{document}